\newtheorem{Theorem}{Theorem}
\newtheorem{Assumption}{Assumption}
\newtheorem{Proposition and Definition}[Theorem]{Proposition and Definition}
\newtheorem{Lemma}{Lemma}
\newtheorem{Definition}{Definition}
\newtheorem{Example}{Example}
\newtheorem{Remark}{Remark}
\newtheorem{Notation}{Notation}
\title{Realization theory of recurrent neural networks and rational systems}
\author{Thibault Defourneau, Mih\'aly Petreczky
\thanks{Centre de Recherche en Informatique, Signal et Automatique de Lille (CRIStAL), CNRS, Ecole Centrale, Université de Lille, Avenue Carl Gauss, 59650 Villeneuve-d'Ascq, France (e-mail:thibault.defourneau@univ-lille.fr, mihaly.petreczky@centralelille.fr) }
\, \thanks{This work was partially funded by CPER Data project, which is co-financed by European Union with the financial support of European Regional Development Fund (ERDF), French State and the French Region of Hauts-de-France. This work was partially funded by Agence Nationale pour la Recherche Project ROCC-SYS under the Grant agreement ANR-14-C27-0008}
}
\begin{document}
\maketitle
\begin{abstract}
In this paper, we show that, under mild assumptions, input-output behavior of a continuous-time recurrent neural network (RNN) can be represented by a  rational or polynomial non-linear system. The assumptions concern the activation function of RNNs, and they are satisfied by many classical activation functions such as the hyperbolic tangent. We also present an algorithm for constructing the polynomial and rational system. This embedding of RNNs into rational systems  can be useful for stability, identifiability, and realization theory for RNNs, as these problems have been studied for polynomial/rational systems. In particular, we use this embedding for deriving necessary conditions for realizability of an input-output map by RNN, and for deriving sufficient conditions for minimality of an RNN.  
\end{abstract}

\section{Introduction}

One of the challenges in machine learning is to provide a mathematical theory for analyzing learning algorithms. Recently, there has been a surge of interest in the use of neural networks, leading to the emergence of the field of deep learning. One of the most widespread models used in deep learning are recurrent neural networks (RNNs). RNNs can seen as non-linear dynamical systems equipped with an internal state, input and output. Learning such an RNN from data is equivalent to estimating the parameters of the RNN, viewed as a dynamical system. That is, learning algorithms for RNNs correspond to system identification algorithms, and developing a mathematical theory for learning RNNs is equivalent to developing system identification for RNNs. There is a rich literature on system identification, in particular on system identification for linear systems \cite{LjungBook}. Note that linear dynamical systems are a particular class of RNNs. 

One of the principal building blocks of system identification theory for linear systems is realization theory. Realization theory can be viewed as an attempt to solve an idealized system identification problem, where there is infinite data, not modelling error, etc. In general, the aim of \emph{realization theory} is to understand the relationship between an observed behavior and dynamical systems producing this observed behavior. For the particular case of RNNs, the main questions of realization theory can be stated as follows:
\begin{enumerate}
\item Which class of observed behaviors (input-output maps) can be represented by an RNN ?
\item How can we characterize minimal RNNs (RNNs of the least complexity) representing a certain observed behavior ? What is the appropriate definition of minimality (smallest number of neurons, etc.) for RNNs, are minimal RNNs are related by some transformation ?
\item Is there a constructive procedure for constructing a RNN representation from input-output behavior which can be proven to be mathematically correct ?
\end{enumerate}

For linear systems, realization theory \cite{kailath,LindquistBook} has been useful for system identification, for example it helped to address identifiability, canonical forms and gave rise to subspace identification algorithms.
We expect that realization theory of RNNs will lead to similarly useful results for the latter dynamical systems.

In order to develop realization theory of RNNs, we embed RNNs into the class  of rational systems, and then we use realization theory of rational/polynomial systems in order to derive new results on realization theory of RNNs. Then the proposed embedding could also be useful beyond realization theory, as it could open up the possibility of studying for example stability of RNNs by using existing results on stability of rational/polynomial systems. For example, for a large class of activation functions, RNNs can be embedded into a homogeneous polynomial systems and the states of the latter systems are continuous functions of the states of the original RNNs. This then opens up the possibility of studying stability of RNNs by using stability theory of homogeneous systems \cite{HomStability1,HomStability2}.


In this paper, we consider RNNs in continuous-time, in order to avoid some technical difficulties encountered in the discrete-time case. Note that discrete-time RNNs can be viewed as discretizations of continuous-time RNNs, i.e. they arise by discretizing the differential equations describing the state evolution of continuous-time RNNs, which indicates that the results of this paper could be relevant for the discrete-time case.

In this paper we assume that the high-order derivatives of the activation function satisfy a polynomial equation. Several widely-used activation functions have this property. 
\begin{itemize}
\item We show that an input-output map can be realized by a RNN, only if it can be realized by a rational system, i.e. a non-linear system defined by vector fields and readout maps which are fractions of polynomials. We present an explicit construction of such a rational system.
\item We present a necessary condition for existence of a realization by RNNs, using results from realization theory of rational systems. This necessary condition is a generalization of the well-known rank condition for Hankel matrices of linear systems.
\item We formulate sufficient conditions for observability/reachability/minimality of RNNs, using existing realization theory for rational systems \cite{JanaSIAM,Jana,Bartoszewicz,SontagWang}.
\end{itemize}

Note that RNNs could be viewed as analytic systems and one could try to apply realization theory of analytic systems \cite{J,Isidori,NLCO}. 
However, realization theory for analytic systems is not computationally effective, i.e., there are no algorithms for checking minimality, deciding equivalence of two systems, transforming a system to a minimal one, etc. This is inherent to the system class: analytical functions do not have a finite representation. This is in contrast to rational and polynomial systems, where tools from computer algebra could be used
\cite{JanaCDC2016}. In addition, since rational and polynomial systems have much more algebraic structures than analytic systems, we expect them to yield richer results for realization theory of RNNs than analytical systems. In fact, the conditions for  observability/reachability/minimality of RNNs which are derived in this paper
are less restrictive than those which can be obtained by viewing RNNs as analytic systems \cite{J,Isidori,NLCO}.

To the best of our knowledge, the results of the paper are new. RNNs have been widely used in the machine learning literature, both in discrete-time and continuous-time, \cite{MachineLearningBook1,MachineLearningBook2}. Observability of RNNs was studied in \cite{ASo}, controllability in \cite{QSc} and minimality in \cite{ASm}. This paper was inspired by \cite{ASo,QSc,ASm}, but in contrast to \cite{ASo,QSc,ASm}, we do not use any assumption on the structure of the weight matrices, except for observability issues. This means that the results of this paper can be applied even when the results of \cite{ASo,QSc,ASm} are not applicable. 

In Section \ref{def} we present the basic notation and terminology, and we present the formal definition of RNNs, rational systems and their input-output maps. In Section \ref{sect:main:embbed} we present the construction of the rational system which realizes the same input-output map as an RNN. In Section \ref{sect:exist}, we use the results of Section \ref{sect:main:embbed} to derive necessary conditions for existence of a realization by RNN. Finally, in Section \ref{sect:min} we use the results of Section \ref{sect:main:embbed} to derive sufficient conditions for minimality of RNNs. An extended version of this paper containing detailed proofs can be found in the technical report \cite{TMArxive}.

\section{Basic definitions} \label{def}

In this section, we give fix some notation and we recall some algebraic tools necessary for this paper. Then we recall the definition of RNNs and of rational systems.

\subsection{Preliminaries}

We use the standard terminology and notation from commutative algebra and algebraic geometry see \cite{kunz:1985,zariski:samuel:1958,cox:little:oshea:1992}.
In particular, by $\mathbb{R}[X_1, \dots ,X_n]$ we denote the algebra of real polynomials in $n$ variables and
by $R(X_1,\ldots,X_n)$ we denote the quotient field of $\mathbb{R}[X_1, \dots ,X_n]$. We refer to the element of  $R(X_1,\ldots,X_n)$ as rational functions in 
$n$ variables. 
If $S$ is an integral domain over $\mathbb{R}$ then the transcendence degree $\mathrm{trdeg} S$ of $S$ over $\mathbb{R}$ is defined as the transcendence degree over $\mathbb{R}$ of the field $F$ of fractions of $S$ and it equals the greatest number of algebraically independent elements of $F$ over $\mathbb{R}$.

\subsection{Recurrent neural networks.} 

Below we define formally what we mean by recurrent neural networks in continuous-time. We will follow the notation of \cite{ASo,QSc,ASm}. 

\begin{Definition} \label{neural_network}
A \emph{recurrent neural network}, abbreviated by RNN, with input-space 
$\mathcal{U} \subset \mathbb{R}^m$ and output-space $\mathbb{R}^{\mathrm{p}}$,  is a dynamical system 
\begin{equation} \label{RNN_equation}
\Sigma: \left\lbrace \begin{array}{lll} 
\dot{x}(t) = \overrightarrow{\sigma} \big( A x(t) + B u(t) \big) \\
x(0) = x_0  \\
y(t) = C x(t) \end{array} \right. 
\end{equation}
where  
\begin{itemize}
\item $\sigma: \mathbb{R} \rightarrow \mathbb{R}$ is a continuous \emph{globally} Lipschitz scalar function. It is called the \emph{activation function} (of the RNN).
\item $A \in \mathbb{R}^{n \times n}$, $B \in \mathbb{R}^{n \times m}$ and $C \in \mathbb{R}^{\mathrm{p} \times n}$ are matrices, called \emph{weight matrices},
\item the map $\overrightarrow{\sigma}: \mathbb{R}^n \rightarrow \mathbb{R}^n$ is defined by 
\begin{equation*}
\overrightarrow{\sigma}: (x_1, \ldots, x_n)^T \mapsto (\sigma(x_1), \ldots, \sigma(x_n))^T \, ,
\end{equation*}
\item $u(t)$ is an input, $x(t) \in \mathbb{R}^n$ is the state and $y(t)$ is the output at time $t$, and $x_0 \in \mathbb{R}^n$ is the initial-state.
\end{itemize}
We denote such a system by $\Sigma = (A, B, C, \mathcal{U}, \sigma, x_0)$.
\end{Definition}

If we take $\sigma$ the identity map in Definition \ref{neural_network}, it is the same as a linear system in control theory. It follows that RNNs provide a class of semi-linear systems, for which one might expect that a theory closer to that of linear system than in case of general non-linear smooth systems. 

Next we define formally what we mean by a solution and an input-output map of a system $\Sigma = (A, B, C, \mathcal{U},\sigma, x_0)$. To this end, in the sequel, we denote by $PC([0; + \infty[, X)$ the set of piecewise-continuous functions from $[0; + \infty[$ to $X$, where $X \subseteq \mathbb{R}^k$, $k > 0$. 

\begin{Definition} \label{solution_of_a_RNN}
A triple $(x, u, y)$ is a \emph{solution} of an RNN $\Sigma = (A, B, C, \mathcal{U},\sigma, x_0)$ if $u \in PC([0; + \infty[, \mathcal{U})$, $x: [0; + \infty [ \rightarrow \mathbb{R}^n$, $y: [0; + \infty [ \rightarrow \mathbb{R}^{\mathrm{p}}$, $x$ is absolutely continuous, and \eqref{RNN_equation} holds.
\end{Definition}

\begin{Remark} \label{remark_solution_RNN}
Let $\Sigma = (A, B, C, \mathcal{U},\sigma, x_0)$ be an RNN. As the activation function $\sigma$ is globally Lipschitz, we know that, for every piecewise continuous input $u: [0; + \infty [ \rightarrow \mathcal{U}$, there exists a unique absolutely continuous functions $x: [0; + \infty [ \rightarrow \mathbb{R}^n$ and a function $y: [0; + \infty [ \rightarrow \mathbb{R}^{\mathrm{p}}$ such that $(x, u, y)$ is a solution of $\Sigma$.
\end{Remark}

In this paper, we focus on solutions $(x, u, y)$ of an RNN such that $u$ is \emph{piecewise constant}.
 
\begin{Notation}[Piecewise-constant inputs]
We denote by $\mathcal{U}_{pc}$ the set of all piecewise-constant functions of the form 
$u:[0; + \infty [ \rightarrow \mathcal{U}$.
\end{Notation}

\begin{Definition} \label{realization_response_map1}
Let $p: \mathcal{U}_{pc} \rightarrow PC([0; + \infty[, \mathbb{R}^{\mathrm{p}})$ be an input-output map, and let $\Sigma = (A, B, C, \mathcal{U},\sigma, x_0)$ be an RNN.
$\Sigma$ is said to be a \emph{realization} of the input-output map $p$ if for every $u \in \mathcal{U}_{pc}$, the unique solution $(x,u,y)$, $x(0)=x_0$ of $\Sigma$
is such that $p(u)=y$.
\end{Definition}

\subsection{Rational and polynomial systems.} 

 Informally, rational respectively polynomial systems are control systems in continuous time, whose differential equations and readout maps are rational functions, i.e. they are fractions of polynomials, respectively polynomial functions.
\begin{Definition}[Polynomial and rational systems] \label{rational_system}
A \emph{rational system} with input-space $\mathcal{U} \subset \mathbb{R}^m$, state-space $\mathbb{R}^n$ and output-space $\mathbb{R}^{\mathrm{p}}$ is a dynamical system as 
\begin{equation} \label{diff_equation_rational_system}
\mathscr{R}: \left\lbrace \begin{array}{lll}
\dot{\upsilon_i}(t)=\frac{P_{i,u(t)}(\upsilon(t))}{Q_{i,u(t)}(\upsilon(t))} , i=1,\ldots,n \, , \; \upsilon(0)=\upsilon_0 \\
y_k(t) =\frac{h_{k,1}(\upsilon(t))}{h_{k,2}(\upsilon(t)))} \, , k=1,\ldots,\mathrm{p}
\end{array} \right.
\end{equation}
where 
\begin{itemize}
\item $u(t)$ is the  input, $\upsilon(t)=(\upsilon_1(t),\ldots,\upsilon_n(t))^T$ is the state and $y(t)=(y_1(t),\ldots,y_k(t))^T$ is the output at time $t$. Moreover $\upsilon_0 \in \mathbb{R}^n$ is the initial state;
\item $h_{k,1},h_{k,2}$, $k=1,\ldots,\mathrm{p}$ are non-zero polynomials in $n$ variables
     and for all $\alpha \in \mathcal{U}$, $i=1,\ldots,n$,
      $P_{i,\alpha},Q_{i,\alpha}$ are polynomials in $n$ variables, $Q_{i,\alpha}$ is non-zero.  
\end{itemize} 
We will identify the rational system $\mathscr{R}$ with the tuple
$(\{P_{i,\alpha},Q_{i,\alpha}\}_{i=1,\ldots,n, \alpha \in \mathcal{U}}, \{h_{k,1},h_{k,2}\}_{k=1}^{\mathrm{p}},\mathcal{U}, \upsilon_0)$.
We will say that $\mathscr{R}$ is \emph{polynomial}, if $h_{k,2}=1$, $Q_{i,\alpha}=1$ for all $k=1,\ldots,\mathrm{p}$, $i=1,\ldots,n$, $\alpha \in \mathcal{U}$. 
\end{Definition}
Informally, a rational system is a non-linear control system for which the right-hand sides of the differential
equation and output equations are rational functions, i.e. fractions of two polynomials. Note that the existence and uniqueness of a state trajectory of a rational system requires some care, as in our definition we did not exclude the possibility that $x(t)$ passes through the zero set of a denominator $Q_{i,\alpha}$. 
In order to avoid technical difficulties, we define a solution of a rational system  as follows:

\begin{Definition} \label{solution_of_rational_systems}
A triplet $(\upsilon, u, y)$ is a solution of a rational system $\mathscr{R}$ of the form \eqref{diff_equation_rational_system}, 
if the input $u: [0; + \infty[ \rightarrow \mathcal{U}$ is piecewise constant, the state $\upsilon: [0; + \infty [ \rightarrow \mathbb{R}^n$ is absolutely continuous,  the output $y: [0; + \infty [ \rightarrow \mathbb{R}^{\mathrm{p}}$ is piecewise continuous, and they satisfy
\begin{equation} \label{rational_equation}
\begin{array}{lll}
\dot{\upsilon_i}(t) \; Q_{i,u(t)}(\upsilon(t)) = P_{i,u(t)}(\upsilon(t)) \, \\
y_k(t) \; h_{k,2}(\upsilon(t)) = h_{k,1}(\upsilon(t)) \, ,
\end{array}
\end{equation}
for $1 \leqslant i \leqslant n$, and $1 \leqslant k \leqslant \mathrm{p}$.
\end{Definition}
\begin{Remark}[Uniqueness of a solution]
Let $(\upsilon, u, y)$ be a solution of a rational system as above. If, for any $t \geqslant 0$ such that $Q_{i,u(t)}(\upsilon(t)) \neq 0$ and $h_{k,1}(\upsilon(t)) \neq 0$, for all $i \in \{1, \ldots, n\}$ and $k \in \{1, \ldots, \mathrm{p}\}$, then \eqref{diff_equation_rational_system} holds. Hence, by uniqueness of a solution of an analytic  differential equation,
for any initial state $\upsilon_0 \in \mathbb{R}^n$ such that $Q_{i,u(t)}(\upsilon_0) \neq 0$ and $h_{k,1}(\upsilon_0) \neq 0$, there exist at most one solution $(\upsilon,u,y)$ of $\mathscr{R}$ such that $\upsilon(0)=\upsilon_0$. 
In particular, if $\mathscr{R}$ is  a polynomial system, then for any initial state $\upsilon_0 \in \mathbb{R}^n$ there exist at most one solution $(\upsilon,u,y)$ of $\mathscr{R}$ such that $\upsilon(0)=\upsilon_0$. 
\end{Remark}
\begin{Definition} \label{realization_response_map}
Let $p: \mathcal{U}_{pc} \rightarrow PC([0; + \infty[, \mathbb{R}^{\mathrm{p}})$ be an input-output map, and let 
$\mathscr{R}$ 
be a rational system of the form \eqref{diff_equation_rational_system}. We say that $\mathscr{R}$ realizes $p$, if for any $u \in \mathcal{U}_{pc}$ there exists  
a solution $(\upsilon, u, y)$ of $\mathscr{R}$ such that $\upsilon(0)=\upsilon_0$ and $p(u)=y$.
\end{Definition}


I


\section{Embedding of a class of recurrent neural network realizations into rational realizations.}  \label{sect:main:embbed}
 In this section  we show that an RNN realization of a given input-output map imply the existence of a rational system which is a realization of the same map. Moreover, we present the construction of such a rational realization. 


In order to state the announced result, we have to restrict the class of activation function $\sigma$
by introducing the following assumption.
\begin{Assumption}[$A1$]
\label{assum1}
 The function $\sigma: \mathbb{R} \rightarrow \mathbb{R}$ is analytic, and there exist an integer $N > 0$ and $N$ analytic functions $\xi_1, \ldots, \xi_N: \mathbb{R} \rightarrow \mathbb{R}$ such that
\begin{equation}\label{Assumption1}
\hspace*{-0.60cm}{
\left\lbrace
\begin{array}{ll}
 \sigma \; V_0(\xi_1, \ldots, \xi_N) = U_0(\xi_1, \ldots, \xi_N)\\[4mm]
\dot{\xi_i} \; V_i(\xi_1, \ldots, \xi_N) = U_i(\xi_1, \ldots, \xi_N) \, , \; \text{if} \; 1 \leqslant i \leqslant N
\end{array} \right.}
\end{equation}
where $U_k, V_k$ are polynomials in $N$ variables. 
\end{Assumption}

Assumption $(A1)$ involves existence of analytic functions $\{\xi_i\}_{i=1}^N$ and hence it is not easy to check it. In fact, Assumption $(A1)$ can be replaced by the following hypothesis, which involves only derivatives of the activation function $\sigma$.

\begin{Assumption}[$A2$]
\label{assum2}
The function $\sigma: \mathbb{R} \rightarrow \mathbb{R}$ is analytic, and there exist an integer $N > 0$ 
and a no-zero polynomial $Q$ in $N+1$ variables, such that 
\begin{equation} \label{Assumption2}
Q(\sigma, \sigma^{(1)} \ldots, \sigma^{(N)}) = 0\\[2mm]
\end{equation}
where $\sigma^{(i)}$ denotes the $i$-th derivative of $\sigma$. 
\end{Assumption}

\begin{Lemma}[Equivalence of $(A1)$ and $(A2)$]
\label{lemma:assum:eq}
 A function $\sigma$ satisfies Assumption $(A1)$ if and only if it satisfies Assumption $(A2)$. 
\end{Lemma}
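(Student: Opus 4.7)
The plan is to prove the two implications separately. For $(A2) \Rightarrow (A1)$ I would take the natural choice $\xi_i := \sigma^{(i-1)}$ for $i = 1, \ldots, N+1$, where $N$ is the integer appearing in $(A2)$. Then $\sigma = \xi_1$ and $\dot{\xi}_i = \xi_{i+1}$ for $1 \leqslant i \leqslant N$ are tautologically of the form required by $(A1)$, with $V_0 = V_i = 1$ and $U_0 = X_1$, $U_i = X_{i+1}$. The only substantive step is to express $\dot{\xi}_{N+1} = \sigma^{(N+1)}$ as a rational function of $\xi_1, \ldots, \xi_{N+1}$. For this I would differentiate the identity $Q(\sigma, \sigma', \ldots, \sigma^{(N)}) = 0$ with respect to $t$, obtaining
\[
\sum_{j=0}^{N} \frac{\partial Q}{\partial X_j}(\sigma, \ldots, \sigma^{(N)}) \cdot \sigma^{(j+1)} = 0,
\]
and then solve for $\sigma^{(N+1)}$, producing $V_{N+1}$ and $U_{N+1}$ as polynomials in $\xi_1, \ldots, \xi_{N+1}$ built from the partial derivatives of $Q$.

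The delicate point is to guarantee that $\frac{\partial Q}{\partial X_N}(\sigma, \ldots, \sigma^{(N)})$ is not identically zero, so that this division is legitimate. I would handle this by a double minimality argument: first shrink $N$ so that it is the smallest integer for which a non-zero polynomial annihilator in $N+1$ variables exists, which forces any such $Q$ to depend nontrivially on $X_N$; then, among all such $Q$, choose one of minimal degree in $X_N$, so that $\frac{\partial Q}{\partial X_N}$ is a non-zero polynomial of strictly smaller $X_N$-degree and therefore cannot annihilate $(\sigma, \ldots, \sigma^{(N)})$ without contradicting minimality.

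For the converse $(A1) \Rightarrow (A2)$, my plan is a transcendence-degree argument in an ambient function field. The ring of real-analytic functions on a connected open interval is an integral domain, so I can embed it into its field of fractions $F$ and consider the subfield $K = \mathbb{R}(\xi_1, \ldots, \xi_N) \subseteq F$, which satisfies $\mathrm{trdeg}_{\mathbb{R}} K \leqslant N$ by construction. From $(A1)$ we have $\sigma = U_0(\xi)/V_0(\xi) \in K$ and $\dot{\xi}_i = U_i(\xi)/V_i(\xi) \in K$; the chain and quotient rules then show that $K$ is stable under time differentiation, so by induction every derivative $\sigma^{(k)}$ lies in $K$. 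In particular, $\sigma, \sigma', \ldots, \sigma^{(N)}$ are $N+1$ elements of a field of transcendence degree at most $N$, hence algebraically dependent over $\mathbb{R}$, yielding exactly the non-zero polynomial $Q$ required by $(A2)$.

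The principal obstacle I anticipate is more technical than conceptual: I need to set up the function field carefully so that the denominators $V_0(\xi), V_i(\xi)$ in $(A1)$ and $\frac{\partial Q}{\partial X_N}(\sigma, \ldots, \sigma^{(N)})$ in the elimination step are known to be non-zero on the trajectory and hence invertible in $F$. Once these non-vanishing statements are in place, both implications reduce to routine manipulations in $\mathbb{R}(X_0, \ldots, X_N)$ together with a single count of transcendence degree.
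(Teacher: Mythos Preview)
Your proposal is correct and matches the paper's approach: both directions use the choice $\xi_i=\sigma^{(i-1)}$ together with differentiation of $Q=0$ for $(A2)\Rightarrow(A1)$, and a transcendence-degree count over the field (or its algebraic closure) generated by $\xi_1,\dots,\xi_N$ for $(A1)\Rightarrow(A2)$. Your double-minimality argument ensuring that $\partial Q/\partial X_N$ does not vanish on $(\sigma,\dots,\sigma^{(N)})$ is in fact more careful than the paper, which simply sets $V_{N+1}=\partial Q/\partial X_{N+1}$ without addressing this point.
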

The proof of Lemma \ref{lemma:assum:eq} is presented in \cite{TMArxive}.

Now we show that some widely used activation functions satisfy Assumption $(A2)$.

\begin{Example} \label{example_activation_functions}
Let consider an analytic RNN of a response map for which the activation function $\sigma$ is the hyperbolic tangent $th$, or the sigmoid function $S$ given below:
\begin{equation*}
\forall x \in \mathbb{R} \, , \hspace*{2mm} th(x) = \frac{e^x - e^{-x}}{e^x + e^{-x}} \, , \hspace*{2mm} S(x) = \frac{1}{1 + e^{-x}} \, .
\end{equation*}
These functions are \emph{analytic} and satisfy a differential polynomial equation, more precisely the hyperbolic tangent verifies $y^{(1)} = 1 - y^2$ with $y(0)=0$, and the sigmoid function satisfies $y^{(1)} = y (1 -y)$ with $y(0) = \frac{1}{2}$. It follows that Assumption $(A2)$ holds.
\end{Example}

Then we restrict the set of input maps, by supposing the following assumption.

\begin{Assumption}[Finite input set]
\label{assum2}
In the rest of the paper we assume that $\mathcal{U} \subset \mathbb{R}^m$ is a \textbf{finite} set.
\end{Assumption}

\begin{Notation}
We denote by $| \mathcal{U} |$ the cardinality of $\mathcal{U}$. We set $\mathcal{U} = \{\alpha_1, \ldots, \alpha_K\}$, where $\alpha_i \in \mathbb{R}^m$, and $\alpha_i \neq \alpha_j$ if $i \neq j$. In that case, we have $| \mathcal{U} | = K$.
\end{Notation} 

The assumption that $\mathcal{U}$ is finite is not an overly restrictive one, and it is satisfied in many
potential applications. 

Next we present the main theorem of this paper.

\begin{Theorem}[Embedding RNNs into rational systems] 
\label{Theoremi:main}
Let $\sigma: \mathbb{R} \rightarrow \mathbb{R}$ be a globally Lipschitz function which satisfies $(A1)$
and assume that $\mathcal{U}$ is finite. Consider an input-output map $p: \mathcal{U}_{pc} \rightarrow PC([0; + \infty[, \mathbb{R}^p)$.
 If $\Sigma$ is a RNN with activation function $\sigma$ and input space $\mathcal{U}$, and 
 $\Sigma$ is a realization of $p$, then there exists a rational system which is a realization of $p$.
\end{Theorem}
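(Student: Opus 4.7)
The strategy is to use the functions $\xi_1,\ldots,\xi_N$ furnished by Assumption $(A1)$ to adjoin auxiliary state variables to the RNN whose values record $\xi_j$ applied to the pre-activation signals, and to verify that in these enlarged coordinates both the dynamics and the output become rational.

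First, I would enumerate $\mathcal{U}=\{\alpha_1,\ldots,\alpha_K\}$ and introduce, alongside the original state $x\in\mathbb{R}^n$, new state coordinates $z_{i,j,\ell}$ for $i\in\{1,\ldots,n\}$, $j\in\{1,\ldots,N\}$, $\ell\in\{1,\ldots,K\}$, with the intended meaning
\[
z_{i,j,\ell}(t)\;=\;\xi_j\bigl((Ax(t)+B\alpha_\ell)_i\bigr).
\]
Writing $z_{i,\cdot,\ell}$ for the tuple $(z_{i,1,\ell},\ldots,z_{i,N,\ell})$, a crucial design choice is that I index $z$ by \emph{every} possible input value $\alpha_\ell$ rather than only the currently active one: this is what keeps the augmented state continuous across the switching instants of the piecewise-constant input $u$, at which $x$ is continuous but the active value of $u$ jumps. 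For each fixed $\alpha=\alpha_{\ell(\alpha)}\in\mathcal{U}$, the first identity of $(A1)$ yields
\[
\dot{x}_i\;=\;\sigma\bigl((Ax+B\alpha)_i\bigr)\;=\;\frac{U_0(z_{i,\cdot,\ell(\alpha)})}{V_0(z_{i,\cdot,\ell(\alpha)})},
\]
and the chain rule together with the second identity of $(A1)$ yields
\[
\dot{z}_{i,j,\ell}\;=\;\frac{U_j(z_{i,\cdot,\ell})}{V_j(z_{i,\cdot,\ell})}\,\sum_{k=1}^{n}A_{ik}\,\frac{U_0(z_{k,\cdot,\ell(\alpha)})}{V_0(z_{k,\cdot,\ell(\alpha)})}.
\]
Clearing denominators produces the polynomial data $P_{\cdot,\alpha},Q_{\cdot,\alpha}$ required by Definition~\ref{rational_system}; the output is $y=Cx$, which is linear and hence polynomial. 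For the initial state, I take $x(0)=x_0$ together with $z_{i,j,\ell}(0)=\xi_j((Ax_0+B\alpha_\ell)_i)$.

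To show that this rational system realizes $p$, I would fix $u\in\mathcal{U}_{pc}$, let $x$ denote the unique RNN trajectory guaranteed by Remark~\ref{remark_solution_RNN}, and define $\tilde z_{i,j,\ell}(t):=\xi_j((Ax(t)+B\alpha_\ell)_i)$. The absolute continuity of $x$ and the analyticity of the $\xi_j$ make $\tilde z$ absolutely continuous on $[0,+\infty[$; on each interval where $u\equiv\alpha$, the chain rule together with the two polynomial identities of $(A1)$ shows that $(x,\tilde z)$ satisfies the polynomial form $\dot\upsilon_i\,Q_{i,u(t)}(\upsilon)=P_{i,u(t)}(\upsilon)$ specified in Definition~\ref{solution_of_rational_systems}; the initial data match by construction. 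Hence $(x,\tilde z,Cx)$ is a solution of the constructed rational system whose output $Cx$ coincides with $p(u)$, which is precisely what Definition~\ref{realization_response_map} requires.

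The main obstacle, in my view, is organizing the auxiliary variables so that the augmented state stays continuous across the switching times of $u$: the naive attempt of adjoining $\xi_j$ only at the currently active pre-activation fails because that quantity jumps with $u$. Indexing by all $\alpha_\ell$ repairs this, and the finiteness of $\mathcal{U}$ is precisely what lets the result qualify as a rational system in the sense of Definition~\ref{rational_system}, which allows distinct polynomial coefficients $P_{i,\alpha},Q_{i,\alpha}$ for each input value but requires the state dimension to be finite. A minor technical issue is that the rational formulas above may have vanishing denominators, but this is harmless: Definition~\ref{solution_of_rational_systems} is phrased as a polynomial identity $\dot\upsilon\,Q=P$ and Definition~\ref{realization_response_map} asks only for existence of a solution, which has been constructed explicitly from the given RNN trajectory.
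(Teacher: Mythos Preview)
Your proposal is correct and follows essentially the same approach as the paper: you construct exactly the rational system $\mathscr{R}(\Sigma)$ of Definition~\ref{rational_system_R(Sigma)} (your $z_{i,j,\ell}$ are the paper's $\upsilon_{j,i,\alpha_\ell}$ with the roles of the first two indices swapped), and your verification that the image of the RNN trajectory under $x\mapsto(x,\tilde z)$ solves the rational system is precisely the content of Lemma~\ref{Theoremi:main:lemma}. Your explicit discussion of why one must index the auxiliary variables by \emph{all} input values to preserve absolute continuity across switching times, and your observation that the polynomial form of Definition~\ref{solution_of_rational_systems} sidesteps any issue with vanishing denominators, are both accurate and slightly more detailed than what the paper makes explicit.
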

The proof of Theorem \ref{Theoremi:main} is presented in \cite{TMArxive}.

The proof of the theorem relies on defining a rational system associated with the RNN.
In order to define this rational system without excessive notation, in the sequel we identify the sum of fraction of multi-variable polynomials $\sum_{k=1}^{N} \frac{P_i}{Q_i}$ with the fraction obtained by bringing all summands to
the same denominator, i.e., $\frac{\sum_{k=1}^{N} P_i\Pi_{r=1, r \ne k}^{N}Q_i}{\Pi_{k=1}^{N} Q_i}$. 

\begin{Definition} \label{rational_system_R(Sigma)}
Let $\Sigma$ be an RNN, whose activation function satisfies $(A1)$, assume that $\xi=(\xi_1,\ldots, \xi_N)^T$ satisfies \eqref{Assumption1}. Define the \emph{rational system $\mathscr{R}(\Sigma)$ associated with the RNN $\Sigma=(A,B,C,\mathcal{U},\sigma,x_0)$},
$A=(a_{i,j})_{i,j=1}^{n}, C=(c_{k,i})_{k=1,\ldots,p,i=1,\ldots,n}$  as follows:
\begin{align*}
& \forall i=1,\ldots,N, ~ j=1,\ldots,n, ~ \alpha \in U: \\
& \dot \upsilon_{i,j,\alpha}(t)= \frac{U_{i}(\upsilon_{j,\alpha}(t))}{V_i(\upsilon_{j,\alpha}(t))}\{\sum_{l=1}^{n} a_{j,l}  \frac{U_0(\upsilon_{l,\beta}(t))}{V_0(\upsilon_{l,\beta}(t))}\} ~ \mbox{ if } u(t)=\beta\\
&  \upsilon_{j,\alpha}(t)=(\upsilon_{1,j,\alpha}(t),\ldots,\upsilon_{N,j,\alpha}(t)),\\
& \upsilon_{j,\alpha}(0)=\xi \big( e_j^T(Ax_0 + B\alpha) ) \\
& \dot x_j(t)=\frac{U_0(\upsilon_{j,\beta}(t))}{V_0(\upsilon_{j,\beta})} ~ \mbox{if } u(t)=\beta, \mbox{ and } x_j(0)=e_j^Tx_0, \\
& y_k(t)=\sum_{i=1}^n c_{k,i}x_i(t), ~ k=1,\ldots, \mathrm{p} \\
\end{align*}
\end{Definition}

%
\begin{Remark}[Constructing $\mathscr{R}(\Sigma)$]
\label{rem:rat:construct}
That is $\mathscr{R}(\Sigma)=(\{P_{i,\alpha},Q_{i,\alpha}\}_{i=1,\ldots,L, \alpha \in \mathcal{U}}, \{h_{k,1},h_{k,2}\}_{k=1}^{\mathrm{p}},\mathcal{U},\upsilon_0)$, where $L=n+nN|\mathcal{U}|$, and $P_{i,\alpha},Q_{i,\alpha},h_{k,1}, h_{k,2}$ are polynomials in the variables $X_1,\ldots, X_L$ of the following form: for any $i=1,\ldots,N,j=1,\ldots,n, \alpha \in \mathcal{U}$, let $\phi(i,j,\alpha) = N \cdot |\mathcal{U}| \cdot (j-1)+N\cdot (r-1)+i$ if $\alpha=\alpha_r$, $r=1, \ldots, K$, and let $X_{j,\alpha}$ denote the tuple $X_{\phi(1,j,\alpha)},\ldots, X_{\phi(N,j,\alpha})$. Note that any $k \in \{1,\ldots,L-n\}$ arises as $\phi(i,j,\alpha)$ for suitable $i=1,\ldots,N,j=1,\ldots,n, \alpha \in \mathcal{U}$. In particular, if $R$ is a polynomial in $N$ variables, then $R(X_{j,\alpha})=R(X_{\phi(1,j,\alpha)},\ldots, X_{\phi(N,j,\alpha})$ is a polynomial in $X_{\phi(1,j,\alpha)},\ldots, X_{\phi(N,j,\alpha})$. With this notation and using that $\xi=(\xi_1,\ldots,\xi_n)$ is from \eqref{Assumption1}, 
\begin{align*}
& \forall k=1,\ldots,\mathrm{p}:  \\
& h_{k,1}=1, \hspace*{2mm} h_{k,2}=\sum_{i=1}^{n} c_k X_{j+|U|Nn} \\
& \forall \; i=1,\ldots,N, ~ j=1,\ldots,n, ~\alpha \in \mathcal{U}: \\
& P_{N|\mathcal{U}|n+j,\alpha}=U_0(X_{j,\alpha}), ~ Q_{N|\mathcal{U}|n+j,\alpha}=V_0(X_{j,\alpha}) \\
& P_{\phi(i,j,\alpha),\beta}= U_i(X_{j,\alpha}) 
  (\sum_{k=1}^{n} a_{j,k} U_0(X_{k,\beta})\Pi_{r=1,r \ne k}^N 
     V_0(X_{r,\beta})) \\
 & Q_{\phi(i,j,\alpha),\beta}= V_i(X_{j,\alpha})\Pi_{r=1}^N V_0(X_{r,\beta}) \\
 & (\upsilon_0)_{\phi(i,j,\alpha)}=\xi_i \big( e_j^T(Ax_0 + B\alpha) \big), ~ (\upsilon_0)_{N|\mathcal{U}|+j}=e_j^Tx_0.
\end{align*}
Here $(\upsilon_0)_k$ denotes the $k$-th entry of $\upsilon_0 \in \mathbb{R}^L$. It is then clear that $\mathscr{R}(\Sigma)$ can be computed from the matrices $A,B,C$ and from the polynomials $\{U_i,V_i\}_{i=0}^{N}$ of Assumption $(A1)$. 
\end{Remark}

\begin{Remark}[Polynomial and homogeneous $\mathscr{R}(\Sigma)$]
If Assumption $(A1)$ is satisfied with polynomial equations, i.e. $V_i=1$, $i=0,\ldots,N$, like in examples from Example \ref{example_activation_functions}, then $\mathscr{R}(\Sigma)$ is a polynomial system. If Assumption $(A1)$ is satisfied with homogeneous polynomial equations, i.e. $V_i=1,i=0,\ldots,N$ and $U_i$ are homogeneous polynomials, then $\mathscr{R}(\Sigma)$ is a polynomial system and $h_{1,k}$, $P_{i,\alpha}$, $\alpha \in \mathcal{U}$, $i=1,\ldots,(N | \mathcal{U} | +1)n$, $k=1,\ldots,\mathrm{p}$ are homogeneous polynomials too. 
\end{Remark}

The proof of Theorem \ref{Theoremi:main} relies on the following simple result, which is interesting on its own right.  Let $\xi=(\xi_1,\ldots,\xi_n)$ is from \eqref{Assumption1}, and define $F:\mathbb{R}^n \rightarrow \mathbb{R}^L$, as $F(x)=(z_1,\ldots,z_{nN|\mathcal{U}|},x^T)^T$, and using the notation of Remark \ref{rem:rat:construct}, let $z_{\phi(i,j,\alpha)}=\xi_i(e_j^T(Ax+B\alpha))$.
\begin{Lemma} 
\label{Theoremi:main:lemma}
If $(x,u,y)$ is a solution of the RNN $\Sigma=(A,B,C,\mathcal{U},\sigma,x_0)$, then $(\upsilon,u,y)$, with $\upsilon(t)=F(x(t))$ for $t \ge 0$, is a solution of $\mathscr{R}(\Sigma)$.  
\end{Lemma}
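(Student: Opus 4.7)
The strategy is to verify directly that $\upsilon(t):=F(x(t))$ satisfies, component by component, the multiplied-through equations of Definition \ref{solution_of_rational_systems} for $\mathscr{R}(\Sigma)$, using the RNN equation for $\dot{x}$ together with both identities in \eqref{Assumption1} to re-express $\sigma$ and $\xi_i'$ as ratios of polynomials in the auxiliary variables. The map $F$ is tailored so that its last $n$ coordinates are just $x$ itself, while the remaining coordinates $\upsilon_{i,j,\alpha}(t)=\xi_i\bigl(e_j^T(Ax(t)+B\alpha)\bigr)$ store the values of $\xi_1,\ldots,\xi_N$ at the pre-activations for every ``virtual'' input symbol $\alpha\in\mathcal{U}$. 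Matching the initial state with the $\upsilon_0$ listed in Remark \ref{rem:rat:construct} is then immediate from the definition of $F$. Absolute continuity of $\upsilon$ follows from absolute continuity of $x$ together with the local Lipschitzness of the analytic map $F$ on bounded sets, and piecewise continuity of $y$ is inherited directly from $y(t)=Cx(t)$.

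For the $x_j$-coordinates, the RNN equation yields $\dot{x}_j(t)=\sigma\bigl(e_j^T(Ax(t)+Bu(t))\bigr)$. Writing $s_{j,\beta}(t):=e_j^T(Ax(t)+B\beta)$ and noting that $\xi_i(s_{j,\beta}(t))=\upsilon_{i,j,\beta}(t)$ by construction of $F$, the first identity of \eqref{Assumption1} evaluated at $s_{j,\beta}(t)$ becomes
$$\sigma(s_{j,\beta}(t))\,V_0(\upsilon_{j,\beta}(t))=U_0(\upsilon_{j,\beta}(t)).$$
When $u(t)=\beta$ this is precisely the multiplied-through form of the equation prescribed for $\dot{x}_j$ in Definition \ref{rational_system_R(Sigma)}.

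For the $\upsilon_{i,j,\alpha}$-coordinates, the chain rule — applicable almost everywhere since $x$ is absolutely continuous and each $\xi_i$ is smooth — gives
$$\dot{\upsilon}_{i,j,\alpha}(t)=\xi_i'(s_{j,\alpha}(t))\,e_j^T A\,\dot{x}(t)=\xi_i'(s_{j,\alpha}(t))\sum_{l=1}^{n} a_{j,l}\,\sigma(s_{l,\beta}(t))$$
whenever $u(t)=\beta$. The second identity in \eqref{Assumption1} rewrites $\xi_i'(s_{j,\alpha}(t))\,V_i(\upsilon_{j,\alpha}(t))=U_i(\upsilon_{j,\alpha}(t))$, and the first identity rewrites each $\sigma(s_{l,\beta}(t))\,V_0(\upsilon_{l,\beta}(t))=U_0(\upsilon_{l,\beta}(t))$. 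Multiplying through by $V_i(\upsilon_{j,\alpha}(t))\prod_{l=1}^{n} V_0(\upsilon_{l,\beta}(t))$ clears all denominators and matches the polynomials $P_{\phi(i,j,\alpha),\beta}, Q_{\phi(i,j,\alpha),\beta}$ of Remark \ref{rem:rat:construct}. The output equation is trivial from $y(t)=Cx(t)$. The only non-trivial aspect of the argument is the bookkeeping required when bringing a sum of $n$ rational summands to a common denominator, which is exactly the convention adopted just before Definition \ref{rational_system_R(Sigma)}; because we work throughout in the multiplied-through form of Definition \ref{solution_of_rational_systems}, no issue arises from potential vanishing of the polynomials $V_i$ or $V_0$ along the trajectory.
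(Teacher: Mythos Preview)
Your proof is correct and follows essentially the same route as the paper: direct verification via the chain rule applied to $\upsilon_{i,j,\alpha}(t)=\xi_i\bigl(e_j^T(Ax(t)+B\alpha)\bigr)$, followed by substitution of the identities \eqref{Assumption1} for $\xi_i'$ and $\sigma$. You are in fact more careful than the paper's version, as you explicitly check the $x_j$-equations, the initial condition, absolute continuity, and work in the multiplied-through form of Definition \ref{solution_of_rational_systems} to sidestep potential vanishing of $V_i$ or $V_0$ along the trajectory.
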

The proof of Lemma \ref{Theoremi:main:lemma} is presented in the technical report \cite{TMArxive}.

\section{Application of the embedding theorem: existence of an RNN realization}
\label{sect:exist}

Theorem \ref{Theoremi:main} allows us to formulate a necessary condition for realizability of an input-output map by an RNN, using conditions of \cite[Theorem 5.16]{JanaSIAM} for existence of a realization by a rational system. In order to present this condition, we need to introduce additional notation and terminology. In particular, we have to define the class of input-output maps which could potentially be realized by an RNN.  

The most basic requirement for an input-output map to be realizable by a control system is causality:

\begin{Definition} \label{definition_causal}
An input-output map $p: \mathcal{U}_{pc} \rightarrow PC([0; + \infty[, \mathbb{R}^{\mathrm{p}})$ is \emph{causal} if,
for all $t \geqslant 0$ and for all $u, v \in \mathcal{U}_{pc}$  such that  $\forall s \in [0,t]: u(s) = v(s)$, it holds that $\forall s \in [0,t]: p(u)(s) = p(v)(s)$.
\end{Definition}
In other words, $p$ is causal, if $p(u)(t)$ depends only on the values of $u$ on the interval $[0,t]$. If $p$ is the input-output map of a control system, then causality must necessarily hold. 

Another  basic requirement is analyticity: if $p$ is the input-output map of a control system defined by a differential equation with analytic right-hand side, then for any piecewise-constant input, $p(u)$ should be analytic in a suitable defined sense, i.e., in the dwell time of the constant pieces of $u$.
\begin{Definition} \label{definition_analycity}
An input-output map $p: \mathcal{U}_{pc} \rightarrow PC([0; + \infty[, \mathbb{R}^{\mathrm{p}})$ is \emph{analytic} if, for all $k \in \{1, \ldots, \mathrm{p}\}$ and for all $\alpha_1,\ldots,\alpha_l \in \mathcal{U}$, $l > 0$, the function $\phi_{p,k,\alpha_1,\ldots,\alpha_l}: ([0,+\infty[)^l \rightarrow \mathbb{R}$ is analytic, where 
\begin{equation}
\label{definition_analycity:eq}
\begin{split}
& \phi_{p,k,\alpha_1,\ldots,\alpha_l}(t_1, \ldots, t_l)=p_k(u_{t_1,\ldots,t_l}^{\alpha_1,\ldots,\alpha_l})(T_l) \\
& u_{t_1,\ldots,t_l}^{\alpha_1,\ldots,\alpha_l}(t)=\left\{\begin{array}{rl}
 \alpha_i & \mbox{ if } \; t \in [T_{i-1}, T_i[, \hspace*{1mm}  i=1,\ldots,l \\
 \alpha_l & \mbox{ if } \; t \ge T_l
 \end{array}\right.    \\
& T_0=0 \, , \hspace*{1mm} T_i=\sum_{j=1}^{i} t_j, \hspace*{1mm} i=1,2,\ldots,l.
\end{split}
\end{equation}
\end{Definition}
\begin{Notation}
We denote by 
$\mathcal{A}(\mathcal{U}_{pc})$
the set of causal analytic input-output maps of the form $p:\mathcal{U}_{pc} \rightarrow PC([0; + \infty[, \mathbb{R})$.
\end{Notation}
Note that the set $\mathcal{A}(\mathcal{U}_{pc})$ forms an algebra over the field of real numbers 
with the usual point-wise addition, multiplication and multiplication by scalar. 

\begin{Remark} \label{algebra_io-map_integral_domain}
It can be shown that the algebra $\mathcal{A} (\mathcal{U}_{pc})$ is isomorphic to the ring of functions  $\mathcal{A} \big( \widetilde{\mathcal{U}_{pc}} \rightarrow \mathbb{R} \big)$ defined in \cite[Definition 4.3]{JanaSIAM}, if we take the set of all piecewise-constant input functions defined on a finite interval as the set of admissible inputs $\widetilde{\mathcal{U}_{pc}}$. Therefore, $\mathcal{A}(\mathcal{U}_{pc})$ has the same algebraic properties as $\mathcal{A} \big( \widetilde{\mathcal{U}_{pc}} \rightarrow \mathbb{R} \big)$, in particular, by \cite[Theorem 4.4]{JanaSIAM}, $\mathcal{A}(\mathcal{U}_{pc})$ is an integral domain. The isomorphism is defined as follows. 
For every  $\psi \in \mathcal{A}(\mathcal{U}_{pc})$ let us define the function $\widetilde{\psi}: \widetilde{\mathcal{U}_{pc}} \rightarrow \mathbb{R}$, such that $\widetilde{\psi}(v)=\psi(u)(T)$ for any piecewise-constant function $v: [0,T] \rightarrow \mathcal{U}$, where $u$ is any piecewise-constant function defined on $[0,+\infty[$ such that the restriction of $u$ to $[0,T]$ equals $v$. Then the map $\psi \mapsto \widetilde{\psi}$ is an algebraic isomorphism from $\mathcal{A}(\mathcal{U}_{pc})$ to $\mathcal{A} \big( \widetilde{\mathcal{U}_{pc}} \rightarrow \mathbb{R} \big)$.
\end{Remark}

\begin{Definition} \label{derivation_real_function}
Let $\varphi \in \mathcal{A}(\mathcal{U}_{pc})$  and  define the \emph{derivative} $D_{\alpha} \varphi$ of $\varphi$ along $\alpha \in \mathcal{U}$ as the function $D_{\alpha} \varphi: \mathcal{U}_{pc} \rightarrow PC([0;+\infty[, \mathbb{R})$, such that for all $u \in \mathcal{U}_{pc}$,
for all $t \geqslant 0$, 
\begin{equation*}
\begin{split}
& \big( D_{\alpha} \varphi(u) \big)(t) = \frac{d}{ds} \Big( \varphi \big( u_{\alpha})(t+s)\Big)_{| \, s = 0} \, , \\
& u_{\alpha}(\tau)=\left\{\begin{array}{rl} 
                     u(\tau) & \tau \in [0,t[ \\
                     \alpha  & \tau > t \\
     \end{array}\right.       
\end{split}
\end{equation*}
\end{Definition}

It is easy to see that $D_{\alpha} \varphi$ is also causal and analytic, and hence $D_{\alpha} \varphi$ belongs to $\mathcal{A}(\mathcal{U}_{pc})$. Now we define the observation algebra of an input-output map.

\begin{Definition} \label{observation_algebra_field}
Let $p: \mathcal{U}_{pc} \rightarrow PC([0; + \infty[, \mathbb{R}^{\mathrm{p}})$ be an analytic and causal input-output map. The \emph{observation algebra} of $p$, denoted by $\mathcal{A}_{obs}(p)$, is the smallest sub-algebra of the algebra $\mathcal{A} \big( \mathcal{U}_{pc} \big)$ such that the following holds.
\begin{itemize}
\item
  Consider the components $p_k: \mathcal{U}_{pc} \rightarrow PC([0; + \infty[, \mathbb{R})$, $k=1,\ldots,\mathrm{p}$ of $p$, i.e., $\forall u \in \mathcal{U}_{pc}: p(u)=(p_1(u),\ldots,p_{\mathrm{p}}(u))^T$. For 
  every $k=1,\ldots,\mathrm{p}$, $p_k \in \mathcal{A}_{obs}(p)$.
 \item
   For every $g \in A_{obs}(p)$, $D_{\alpha} g \in A_{obs}(p)$, $\alpha \in \mathcal{U}$, i.e.,
   $A_{obs}(p)$ is closed under taking derivatives $D_{\alpha}$, $\alpha \in \mathcal{U}$.
 \end{itemize}
We call the \emph{observation field}, denoted by $\mathcal{Q}_{obs}(p)$, the field of fractions of $\mathcal{A}_{obs}(p)$. 
\end{Definition}

\begin{Remark} \label{remark_algebra_iomap_response-map}
Observation algebra / field have already been introduced in \cite[Definition 5.9]{JanaSIAM} or in \cite[Definition 4.7]{Jana} for response maps. Moreover we know that the field $\mathcal{Q}_{obs}(p)$ is well-defined because, from Remark \ref{algebra_io-map_integral_domain} we know that the algebra $\mathcal{A}( \mathcal{U}_{pc})$ is an integral domain. Thus the transcendence degree of $\mathcal{A}_{obs}(p)$, denoted by $\mathrm{trdeg} \mathcal{A}_{obs}(p)$, is well-defined, see Section \ref{def} for the definition of transcendence degree. 
\end{Remark}

Now if an input-output map $p: \mathcal{U}_{pc} \rightarrow PC([0; + \infty[, \mathbb{R}^{\mathrm{p}})$ is realized by an RNN $\Sigma = (A,B,C,\mathcal{U},\sigma,x_0)$, with $\sigma$ satisfying Assumption $(A1)$, then the rational system $\mathscr{R}(\Sigma)$, given in Definition \ref{rational_system_R(Sigma)}, also realizes the input-output map $p$ by Theorem \ref{Theoremi:main} and Lemma \ref{lemma:assum:eq}. Thus the transcendence degree of the observation algebra $\mathcal{A}_{obs}(p)$ of $p$ should be necessary finite by \cite[Theorem 5.16]{JanaSIAM}. We are now in the position to state a necessary condition for existence of an RNN realization, which summarizes the arguments above.

\begin{Theorem}[Existence of an RNN: necessary condition] \label{Existence_realization_RNN}
The input-output map $p: \mathcal{U}_{pc} \rightarrow PC([0; + \infty[, \mathbb{R}^{\mathrm{p}})$ has a realization by an RNN whose activation function satisfies Assumption $(A1)$ only if $p$ is causal, analytic and $\mathrm{trdeg}~  A_{obs}(p) < +\infty$.
\end{Theorem}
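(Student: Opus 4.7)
The plan is to verify the three necessary conditions (causality, analyticity, finite transcendence degree of the observation algebra) separately, exploiting Theorem \ref{Theoremi:main} for the algebraic condition and the ODE structure of \eqref{RNN_equation} for the other two. Throughout, assume $p$ is realized by an RNN $\Sigma=(A,B,C,\mathcal{U},\sigma,x_0)$ with $\sigma$ satisfying $(A1)$, in particular analytic and globally Lipschitz.

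First I would establish causality. By Remark \ref{remark_solution_RNN} and the Lipschitz property of $\overrightarrow{\sigma}$, the state trajectory $x(t)$ of $\Sigma$ is uniquely determined by the restriction of the input $u$ to $[0,t]$: indeed, if $u,v\in\mathcal{U}_{pc}$ coincide on $[0,t]$, then the corresponding solutions of the initial value problem \eqref{RNN_equation} coincide on $[0,t]$ by uniqueness, hence so do the outputs $y=Cx$. This gives $p(u)(s)=p(v)(s)$ on $[0,t]$.

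Next I would verify analyticity. Fix $\alpha_1,\dots,\alpha_l\in\mathcal{U}$ and consider the function $\phi_{p,k,\alpha_1,\dots,\alpha_l}$ from \eqref{definition_analycity:eq}. Because $\overrightarrow{\sigma}$ is analytic and the input is constant on each subinterval $[T_{i-1},T_i[$, the state $x$ evolves on each such subinterval under an autonomous analytic vector field $z\mapsto \overrightarrow{\sigma}(Az+B\alpha_i)$. The flow of an analytic vector field depends analytically on both time and initial condition (on the domain of existence, which is all of $[0,+\infty[$ by the global Lipschitz hypothesis). Composing these analytic flows over the $l$ subintervals, one sees that $x(T_l)$ depends analytically on $(t_1,\dots,t_l)$, and therefore so does $y_k(T_l)=e_k^T C x(T_l)=\phi_{p,k,\alpha_1,\dots,\alpha_l}(t_1,\dots,t_l)$. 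Thus $p$ is analytic in the sense of Definition \ref{definition_analycity}.

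Finally, I would invoke Theorem \ref{Theoremi:main} and Lemma \ref{lemma:assum:eq}: since $\sigma$ satisfies $(A1)$, there exists a rational system $\mathscr{R}(\Sigma)$ that realizes the same input-output map $p$. By Remark \ref{algebra_io-map_integral_domain}, the algebra $\mathcal{A}(\mathcal{U}_{pc})$ is isomorphic to the algebra used in \cite{JanaSIAM}, and this isomorphism carries the observation algebra $\mathcal{A}_{obs}(p)$ to the observation algebra in the sense of \cite[Definition 5.9]{JanaSIAM}. Hence \cite[Theorem 5.16]{JanaSIAM} applies, and existence of a rational realization forces $\mathrm{trdeg}\,\mathcal{A}_{obs}(p)<+\infty$. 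The main subtlety I expect is bookkeeping the identification in Remark \ref{algebra_io-map_integral_domain} so that the finite-transcendence-degree statement of \cite{JanaSIAM}, originally formulated for response maps on inputs of finite duration, genuinely applies to our input-output maps defined on $\mathcal{U}_{pc}$; once that is in place the three conditions together give the theorem.
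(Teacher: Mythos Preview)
Your proposal is correct and follows essentially the same approach as the paper. The paper's argument (given in the paragraph preceding the theorem, with details deferred to \cite{TMArxive}) is precisely: realize $p$ by $\mathscr{R}(\Sigma)$ via Theorem~\ref{Theoremi:main}, then apply \cite[Theorem 5.16]{JanaSIAM} through the identification of Remark~\ref{algebra_io-map_integral_domain} to obtain $\mathrm{trdeg}\,\mathcal{A}_{obs}(p)<+\infty$; causality and analyticity are treated as standard consequences of the analytic ODE structure, and your arguments for those two points are exactly the expected ones.
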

The proof of Theorem \ref{Existence_realization_RNN} is presented in  \cite{TMArxive}.


\section{Minimality, reachability and observability of RNNs} 
\label{sect:min}

In this section, we first provide sufficient conditions for minimality of a given RNN $\Sigma$ by assuming minimality of the rational system $\mathscr{R}(\Sigma)$ given in Definition \ref{rational_system_R(Sigma)}. Since the latter rational system is often non-minimal, we introduce in the sequel another rational system, called the \emph{auxiliary rational system} of the RNN $\Sigma$, denoted by $\mathscr{R}_{aux}(\Sigma)$, also used for providing sufficient minimal conditions of $\Sigma$. Then we provide a Hankel-rank like condition for minimality of the RNN $\Sigma$. 
Finally we relate reachability and observability properties of $\mathscr{R}_{aux}(\Sigma)$, well-known for rational systems, to similar properties for $\Sigma$, namely \emph{span-reachability} introduced later in this paper and a necessary condition of observability provided in \cite[Theorem 1]{ASo}. 

\subsection{Sufficient conditions for minimality of RNNs}


As the first step, we define the notion of dimension for RNNs and rational systems. Let $\Sigma$ be an RNN as in Definition \ref{neural_network}. The \emph{dimension} of $\Sigma$ is the dimension of its state-space (i.e. the number of states), and it is denoted by $dim(\Sigma)$. In this case, we have $dim(\Sigma) = n$. Consider a rational system $\mathscr{R}$ of the form \eqref{diff_equation_rational_system}. The dimension of $\mathscr{R}$, denoted by $dim(\mathscr{R})$, is here defined as the number of state-variables, i.e. $dim(\mathscr{R}) = n$.
\begin{Remark}[Dimension of rational systems]
In \cite[Definition 13]{JanaVS} rational systems were defined as systems state-space of which is an irreducible algebraic
variety and the dimension of a rational system was defined as the transcendence degree of the ring of all polynomial functions on this variety. 
In our case the state-space of the system is $\mathbb{R}^n$ which is a trivial algebraic variety.  Our definition of dimension coincides with that of \cite[Definition 13]{JanaVS}, as the transcendence degree of the ring of polynomials on
$\mathbb{R}^n$ is $n$. 
\end{Remark}
Now we are able to define minimal RNN realization and minimal rational realization.
\begin{Definition}[Minimality]
We say that a rational system $\mathscr{R}$ is a \emph{minimal realization} of an input-output map $p$, if $\mathscr{R}$ is a realization of $p$ and there exists no rational system $\mathscr{R}^{'}$ such that $\mathscr{R}^{'}$ is a realization of $p$ and $\dim(\mathscr{R}^{'}) < \dim(\mathscr{R})$. An RNN $\Sigma$ with activation function $\sigma$ is said to be a \emph{$\sigma$-minimal} realization of an input-output function $p$, if $\Sigma$ is a realization of $p$ and there exists no RNN $\Sigma^{'}$ with activation function $\sigma$, such that $\Sigma^{'}$ is a realization of $p$ and $\dim(\Sigma^{'}) < \dim(\Sigma)$.
\end{Definition}
By considering the rational system $\mathscr{R}(\Sigma)$, we provide sufficient condition for minimality of RNNs as follows.
\begin{Lemma}
\label{minimality_1}
Let $\Sigma = (A, B, C, \sigma, x_0)$ be an RNN, whose activation function $\sigma$ satisfies $(A1)$ and assume that $\Sigma$ is a realization of the input-output map $p$. If the rational system $\mathscr{R}(\Sigma)$, given in Definition \ref{rational_system_R(Sigma)}, is a minimal realization of $p$, then $\Sigma$ is a minimal RNN realization of $p$.
\end{Lemma}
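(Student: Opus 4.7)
The plan is to prove the contrapositive, or equivalently argue by contradiction, exploiting the fact that the construction $\Sigma \mapsto \mathscr{R}(\Sigma)$ from Definition \ref{rational_system_R(Sigma)} is dimension-monotone in a strict sense. Specifically, by Remark \ref{rem:rat:construct}, the dimension of the rational system attached to an RNN $\Sigma$ of dimension $n$ is
\begin{equation*}
\dim(\mathscr{R}(\Sigma)) = n + n N |\mathcal{U}| = n \bigl(1 + N |\mathcal{U}|\bigr),
\end{equation*}
where $N$ depends only on the activation function $\sigma$ (through Assumption $(A1)$) and $|\mathcal{U}|$ is the fixed cardinality of the finite input set. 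Since $1 + N|\mathcal{U}|$ is a strictly positive integer depending only on $\sigma$ and $\mathcal{U}$ (not on the particular RNN), the map $\Sigma \mapsto \dim(\mathscr{R}(\Sigma))$ is strictly monotone in $\dim(\Sigma)$ among RNNs sharing the same activation function and input set.

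The argument then runs as follows. Assume for contradiction that $\Sigma$ is not a $\sigma$-minimal RNN realization of $p$. By definition, there exists an RNN $\Sigma' = (A', B', C', \mathcal{U}, \sigma, x_0')$ with the same activation function $\sigma$ and input space $\mathcal{U}$, such that $\Sigma'$ realizes $p$ and $n' := \dim(\Sigma') < \dim(\Sigma) =: n$. Since $\sigma$ satisfies $(A1)$ and $\mathcal{U}$ is finite, I may apply the construction of Definition \ref{rational_system_R(Sigma)} to $\Sigma'$ and obtain the rational system $\mathscr{R}(\Sigma')$. By Lemma \ref{Theoremi:main:lemma} (which directly yields Theorem \ref{Theoremi:main}), every solution of $\Sigma'$ lifts through $F'$ to a solution of $\mathscr{R}(\Sigma')$ with the same input and output, so $\mathscr{R}(\Sigma')$ is a rational realization of $p$.

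Using the dimension formula above applied to both $\Sigma$ and $\Sigma'$, one gets
\begin{equation*}
\dim(\mathscr{R}(\Sigma')) = n'\bigl(1 + N|\mathcal{U}|\bigr) < n\bigl(1 + N|\mathcal{U}|\bigr) = \dim(\mathscr{R}(\Sigma)),
\end{equation*}
since $n' < n$ and $1 + N|\mathcal{U}| \geqslant 1$. But then $\mathscr{R}(\Sigma')$ is a rational realization of $p$ of strictly smaller dimension than $\mathscr{R}(\Sigma)$, contradicting the assumption that $\mathscr{R}(\Sigma)$ is a minimal rational realization of $p$. Hence $\Sigma$ must be $\sigma$-minimal.

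I do not expect any serious obstacle here: the only two ingredients are the explicit dimension count from Remark \ref{rem:rat:construct} and the fact that the construction $\Sigma' \mapsto \mathscr{R}(\Sigma')$ produces a realization of $p$, which is exactly what Theorem \ref{Theoremi:main} together with Lemma \ref{Theoremi:main:lemma} provides. The mildest subtlety to state carefully is that the integer $N$ (and hence the factor $1 + N|\mathcal{U}|$) depends only on $\sigma$ via Assumption $(A1)$, and therefore the same $N$ is used in both $\mathscr{R}(\Sigma)$ and $\mathscr{R}(\Sigma')$; this is what makes the dimension comparison valid.
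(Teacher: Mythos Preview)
Your proof is correct and follows essentially the same approach as the paper: argue by contradiction, apply the construction $\Sigma' \mapsto \mathscr{R}(\Sigma')$ to a hypothetical smaller RNN realization, and use the dimension formula $\dim(\mathscr{R}(\Sigma)) = n(1 + N|\mathcal{U}|)$ from Remark \ref{rem:rat:construct} together with Theorem \ref{Theoremi:main} to obtain a smaller rational realization of $p$, contradicting minimality of $\mathscr{R}(\Sigma)$. Your explicit remark that $N$ depends only on $\sigma$ (so the same $N$ is used for both $\Sigma$ and $\Sigma'$) is a useful clarification that the paper's analogous argument (for $\mathscr{R}_{aux}$ in Lemma \ref{RNN_minimality}) leaves implicit.
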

The proof of Lemma \ref{minimality_1} is presented in  \cite{TMArxive}.

Unfortunately, in most of the cases, $\mathscr{R}(\Sigma)$ will not be minimal. Intuitively, this has to do with the fact that the states $x_1(t), \ldots, x_n(t)$ of $\mathscr{R}(\Sigma)$ are integrals of the other states, leading to lack of observability if $n > 1$ and $x(0)$ is chosen so that $Cx(0)=0$. 

In order to remedy this problem,  we introduce another rational system, called the \emph{auxiliary rational system} depending on the RNN $\Sigma$ with less components in the state. In order to define this rational system without excessive notation, we will use the same convention as for defining $\mathscr{R}(\Sigma)$, namely we identify the sum of fraction of multi-variable polynomials $\sum_{k=1}^{N} \frac{P_i}{Q_i}$ with the fraction obtained by bringing all summands to the same denominator, i.e., $\frac{\sum_{k=1}^{N} P_i\Pi_{r=1, r \ne k}^{N}Q_i}{\Pi_{k=1}^{N} Q_i}$. 

\begin{Definition} \label{Definition_R_aux_Sigma}
Let $\Sigma=(A,B,C,\mathcal{U},\sigma,x_0)$ be an RNN, whose activation function satisfies $(A1)$, assume that
$\xi=(\xi_1,\ldots, \xi_N)$ satisfies \eqref{Assumption1}. Define the \emph{ auxiliary rational system $\mathscr{R}_{aux}(\Sigma)$ associated with the RNN $\Sigma$}, $A=(a_{i,j})_{i,j=1}^{n}, C=(c_{k,i})_{k=1,\ldots,p,i=1,\ldots,n}$  as follows:
\begin{align*}
& \forall i=1,\ldots,N, ~ j=1,\ldots,n, ~ \alpha \in U: \\
& \dot{\upsilon}_{i,j,\alpha}(t) = \frac{U_i(\upsilon_{j,\alpha}(t))}{V_i(\upsilon_{j,\alpha}(t))} \, \{ \, \sum_{l=1}^n a_{j,l} \frac{U_0(\upsilon_{l,\beta}(t))}{V_0(\upsilon_{l,\beta}(t))} \, \} \, , \; \mbox{if} \; u(t) = \beta\\
& \upsilon_{j,\alpha}(t) = (\upsilon_{1,j,\alpha}(t), \ldots, \upsilon_{n,j,\alpha}(t))^T \, ,\\
& \upsilon_{j,\alpha}(0)=\xi \big( e_j^T (A x_0 + B\alpha) \big) \, , \\
& y_{k,\alpha}(t) = \sum_{i=1}^n c_{k,i} \, \frac{U_0(\upsilon_{i,\alpha}(t))}{V_0(\upsilon_{i,\alpha}(t))} \, , ~ k=1,\ldots,\mathrm{p} \\
\end{align*}
\end{Definition}

\begin{Remark}[Polynomial $\mathscr{R}_{aux}(\Sigma)$]
If Assumption $(A1)$ is satisfied with polynomial equations, i.e. $V_i=1$, $i=0,\ldots,N$, like in examples from Example \ref{example_activation_functions}, then $\mathscr{R}_{aux}(\Sigma)$ is a polynomial system.
\end{Remark}

Note that the rational system $\mathscr{R}_{aux}(\Sigma)$, given in Definition \ref{Definition_R_aux_Sigma}, does not realize the input-output map $p_{\Sigma, x_0}$ of the RNN $\Sigma = (A,B,C,\mathcal{U},\sigma,x_0)$ in general. Roughly speaking, it realizes the input-output map constructed with the derivatives of $p_{\Sigma,x_0}$ along $\alpha_1, \ldots, \alpha_K \in \mathcal{U}$ in the sense of Definition \ref{derivation_real_function}. We now formalize it properly. Let $p: \mathcal{U}_{pc} \rightarrow PC([0;+\infty[,\mathbb{R}^{\mathrm{p}})$ be an input-output map realized by the RNN $\Sigma$. Define the input-output map $\hat{p}: \mathcal{U}_{pc} \rightarrow PC([0;+\infty[,\mathbb{R}^{\mathrm{p} K})$, where $K = | \mathcal{U} |$, as follows:
\begin{equation} \label{input-output_map_derivatives}
\forall u \in \mathcal{U}_{pc} \, , \quad \hat{p}(u) = (D_{\alpha_1}p(u), \ldots, D_{\alpha_K}p(u))^T \, ,
\end{equation}
with, for $\alpha \in \mathcal{U}$, $D_{\alpha}p(u) = (D_{\alpha}p_k(u))_{1 \leqslant k \leqslant \mathrm{p}}$, where $D_{\alpha} p_k(u)$ is defined in Definition \ref{derivation_real_function}. 

\begin{Lemma} \label{realization_of_R_aux_Sigma}
Let $\Sigma = (A,B,C,\mathcal{U},\sigma,x_0)$ be an RNN and let $p: \mathcal{U}_{pc} \rightarrow PC([0;+\infty[,\mathbb{R}^{\mathrm{p}})$ be an input-output map. If the RNN $\Sigma$ realizes $p$, then the rational system $\mathscr{R}_{aux}(\Sigma)$ realizes the input-output map $\hat{p}$ defined in \eqref{input-output_map_derivatives}.
\end{Lemma}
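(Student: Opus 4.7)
The plan is to construct explicitly a candidate solution of $\mathscr{R}_{aux}(\Sigma)$ out of the RNN trajectory and verify that this candidate satisfies the defining equations and that its readout matches $\hat p$. Concretely, let $(x,u,y)$ be the unique solution of $\Sigma$ from $x_0$ corresponding to an arbitrary $u \in \mathcal{U}_{pc}$; by hypothesis $y=p(u)$. For every $i=1,\ldots,N$, $j=1,\ldots,n$ and $\alpha \in \mathcal{U}$, set
\begin{equation*}
\upsilon_{i,j,\alpha}(t) := \xi_i\bigl(e_j^T(Ax(t)+B\alpha)\bigr), \qquad \upsilon_{j,\alpha}(t) := (\upsilon_{1,j,\alpha}(t),\ldots,\upsilon_{N,j,\alpha}(t))^T,
\end{equation*}
and take $y_{k,\alpha}(t) := \sum_{i=1}^n c_{k,i}\,U_0(\upsilon_{i,\alpha}(t))/V_0(\upsilon_{i,\alpha}(t))$ as the candidate output. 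Since $x$ is absolutely continuous and the $\xi_i$ are analytic, $\upsilon_{i,j,\alpha}$ is absolutely continuous and $y_{k,\alpha}$ is piecewise continuous, as required by Definition \ref{solution_of_rational_systems}. At $t=0$, the definition gives $\upsilon_{j,\alpha}(0)=\xi(e_j^T(Ax_0+B\alpha))$, which matches the prescribed initial state of $\mathscr{R}_{aux}(\Sigma)$.

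Second, I would verify the differential equation almost everywhere. On any open interval where $u(t)=\beta$ is constant, differentiate $\upsilon_{i,j,\alpha}$ via the chain rule:
\begin{equation*}
\dot\upsilon_{i,j,\alpha}(t) = \xi_i'\bigl(e_j^T(Ax(t)+B\alpha)\bigr)\cdot e_j^TA\dot x(t).
\end{equation*}
The second equation of Assumption $(A1)$ applied at the scalar argument $e_j^T(Ax(t)+B\alpha)$ yields $\xi_i'(\cdot)=U_i(\upsilon_{j,\alpha}(t))/V_i(\upsilon_{j,\alpha}(t))$. The RNN equation gives $\dot x_l(t)=\sigma(e_l^T(Ax(t)+B\beta))$, and the first equation of Assumption $(A1)$ rewrites this as $\dot x_l(t)=U_0(\upsilon_{l,\beta}(t))/V_0(\upsilon_{l,\beta}(t))$. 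Substituting and using $e_j^TA\dot x(t)=\sum_l a_{j,l}\dot x_l(t)$ produces exactly the right-hand side of the ODE in Definition \ref{Definition_R_aux_Sigma}. Multiplying by the appropriate denominators gives the identities required by Definition \ref{solution_of_rational_systems}.

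Third, and this is the step where most care is needed, I would identify the readout $y_{k,\alpha}$ with the component $D_\alpha p_k(u)$ of $\hat p(u)$. Fix $t\ge 0$ and consider the perturbed input $u_\alpha$ from Definition \ref{derivation_real_function}; since $\Sigma$ realizes $p$, $p_k(u_\alpha)(t+s)=C_k\widetilde x(t+s)$ where $\widetilde x$ is the state trajectory of $\Sigma$ under $u_\alpha$ and $C_k$ is the $k$-th row of $C$. By uniqueness (Remark \ref{remark_solution_RNN}), $\widetilde x(t)=x(t)$, and on $[t,t+\varepsilon[$ the state $\widetilde x$ satisfies $\dot{\widetilde x}(t+s)=\vec\sigma(A\widetilde x(t+s)+B\alpha)$; taking the right derivative at $s=0$ gives $D_\alpha p_k(u)(t)=\sum_i c_{k,i}\,\sigma(e_i^T(Ax(t)+B\alpha))$. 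Using once more the first relation of $(A1)$, this equals $\sum_i c_{k,i}\,U_0(\upsilon_{i,\alpha}(t))/V_0(\upsilon_{i,\alpha}(t))=y_{k,\alpha}(t)$, so the vector-valued readout of $\mathscr{R}_{aux}(\Sigma)$ coincides with $\hat p(u)$ componentwise.

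The main obstacle is the third step: one must be precise about the right-sided derivative in the definition of $D_\alpha$, invoke uniqueness of the RNN solution to ensure $\widetilde x$ agrees with $x$ at time $t$, and use the Assumption $(A1)$ identities outside the polynomial denominators (which never vanish along the trajectory since $V_0\circ\xi$ and $V_i\circ\xi$ are realized as $\sigma/\xi_i'$-type data tied to the analytic solution). Steps one and two are essentially a routine chain-rule computation combined with the two algebraic identities of $(A1)$.
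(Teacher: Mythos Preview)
Your proposal is correct and follows essentially the same route as the paper: define $\upsilon_{i,j,\alpha}(t)=\xi_i(e_j^T(Ax(t)+B\alpha))$, then check that the readout $y_{k,\alpha}$ equals $D_\alpha p_k(u)$ by differentiating $p_k(u_\alpha)(t+s)$ at $s=0$, using the RNN equation and the identity $\sigma=U_0/V_0$ from $(A1)$. The paper's proof is terser---it omits your step two (verification of the state ODE) since that computation is identical to the one already carried out for Lemma \ref{Theoremi:main:lemma}, and it does not spell out the uniqueness argument ensuring $\widetilde x(t)=x(t)$---but the substance is the same.
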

The proof of Lemma \ref{realization_of_R_aux_Sigma} is presented in  \cite{TMArxive}.

\begin{Lemma} \label{RNN_minimality}
Let $\Sigma = (A,B,C,\mathcal{U},\sigma,x_0)$ be an RNN and let $p: \mathcal{U}_{pc} \rightarrow PC([0;+\infty[,\mathbb{R}^p)$ be an input-output map realized by $\Sigma$. If the rational system $\mathscr{R}_{aux}(\Sigma)$ is a minimal realization of $\hat{p}$, then the RNN $\Sigma$ is $\sigma$-minimal realization of $p$.
\end{Lemma}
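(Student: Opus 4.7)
The plan is to argue by contradiction, exploiting the fact that the auxiliary construction $\Sigma \mapsto \mathscr{R}_{aux}(\Sigma)$ is strictly dimension-monotone. Reading off from Definition \ref{Definition_R_aux_Sigma}, the state variables of $\mathscr{R}_{aux}(\Sigma)$ are indexed by $(i,j,\alpha) \in \{1,\ldots,N\}\times\{1,\ldots,n\}\times\mathcal{U}$, so the first preliminary observation to record is the identity
\[
\dim(\mathscr{R}_{aux}(\Sigma)) = N\,|\mathcal{U}|\,\dim(\Sigma).
\]
Combined with Lemma \ref{realization_of_R_aux_Sigma}, this means that any strictly smaller RNN realization of $p$ would immediately yield a strictly smaller rational realization of $\hat{p}$, in contradiction with the hypothesis on $\mathscr{R}_{aux}(\Sigma)$.

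Concretely, I would assume for contradiction that $\Sigma$ is not $\sigma$-minimal for $p$. Then there exists an RNN $\Sigma' = (A',B',C',\mathcal{U},\sigma,x_0')$ with the same activation function $\sigma$ that realizes $p$ and satisfies $\dim(\Sigma') < \dim(\Sigma) = n$. Since $\sigma$ satisfies $(A1)$ by hypothesis on $\Sigma$, Lemma \ref{realization_of_R_aux_Sigma} applies to $\Sigma'$ as well, so $\mathscr{R}_{aux}(\Sigma')$ is a rational realization of $\hat{p}$. The dimension count from the preceding paragraph then gives
\[
\dim(\mathscr{R}_{aux}(\Sigma')) = N\,|\mathcal{U}|\,\dim(\Sigma') < N\,|\mathcal{U}|\,n = \dim(\mathscr{R}_{aux}(\Sigma)),
\]
contradicting the assumed minimality of $\mathscr{R}_{aux}(\Sigma)$ among \emph{all} rational realizations of $\hat{p}$. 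Therefore $\Sigma$ must be $\sigma$-minimal.

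The one subtle point that I would verify carefully is that the target map $\hat{p}$ is intrinsic to $p$ rather than to the particular RNN chosen to realize it: this is clear from \eqref{input-output_map_derivatives}, since $\hat{p}$ is built from $p$ only through the derivation operators $D_{\alpha}$, $\alpha \in \mathcal{U}$, defined on $\mathcal{A}(\mathcal{U}_{pc})$. Hence $\mathscr{R}_{aux}(\Sigma)$ and $\mathscr{R}_{aux}(\Sigma')$ are compared against the same map, and the dimension inequality closes the argument. I do not anticipate any genuine obstacle here: once Lemma \ref{realization_of_R_aux_Sigma} and the dimension formula for $\mathscr{R}_{aux}(\cdot)$ are on the table, the proof is essentially bookkeeping.
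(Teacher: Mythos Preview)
Your argument is correct and follows essentially the same route as the paper's proof: both proceed by contradiction, use the dimension identity $\dim(\mathscr{R}_{aux}(\Sigma)) = N\,|\mathcal{U}|\,\dim(\Sigma)$, and observe that a strictly smaller RNN realization $\Sigma'$ of $p$ would yield, via Lemma~\ref{realization_of_R_aux_Sigma}, a strictly smaller rational realization $\mathscr{R}_{aux}(\Sigma')$ of $\hat{p}$. Your extra remark that $\hat{p}$ depends only on $p$ (and not on the particular RNN) is a worthwhile clarification that the paper leaves implicit.
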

The proof of Lemma \ref{RNN_minimality} is presented in  \cite{TMArxive}.

It is known from \cite{Jana,NashSIAM} that the properties of \emph{algebraic, rational and semi-algebraic observability} and \emph{algebraic reachability} characterize minimality of rational systems and these properties can be checked using methods of computational algebra \cite{JanaCDC2016}. In particular, we can derive sufficient conditions for the minimality of the RNN $\Sigma$ using these reachability and observability concepts for rational systems. 

In order to explore these sufficient conditions in more details, we will recall below the notions of algebraic reachability and algebraic/semi-algebraic observability for rational systems. Define the set of reachable states of a  rational system $\mathscr{R}$ of the form \eqref{diff_equation_rational_system} as $\mathrm{R}_{\mathscr{R}}(\upsilon_0)$:
\begin{equation*} 
\begin{split}
& \mathrm{R}_{\mathscr{R}}(\upsilon_0) = \{ \upsilon(t) \; | \; t \geqslant 0, (\upsilon,u,y) \\
& \mbox{ is a solution of } \mathscr{R}, \upsilon(0)=\upsilon_0\} \, .
\end{split}
\end{equation*}
The system $\mathscr{R}$ is said to be \emph{algebraically reachable}, if there is no non-trivial polynomial which is zero on
$\mathrm{R}_{\mathscr{R}}(\upsilon_0)$. The system $\mathscr{R}$ is called accessible, if $\mathrm{R}_{\mathscr{R}}(\upsilon_0)$ contains an open subset of $\mathbb{R}^n$. It is clear that accessibility of $\mathscr{R}$ implies algebraic reachability of $\mathscr{R}$. 

For a rational system $\mathscr{R}$ as in Definition \ref{rational_system} with state-space $\mathbb{R}^n$, recall from \cite[Definition 3.19]{Jana} or from \cite[Definition 4]{Bartoszewicz1} that \emph{observation algebra} of $\mathscr{R}$, denoted by $\mathcal{A}_{obs}(\mathscr{R})$, is the smallest sub-algebra of the field of rational functions $R(X_1,\ldots,X_n)$ which contains $\frac{h_{k,1}}{h_{k,2}}$, $k=1,\ldots,\mathrm{p}$
and which is closed under taking the formal Lie derivatives with respect to the formal vector fields
$f_{\alpha}=\sum_{i=1}^{n} \frac{P_{i,\alpha}}{Q_{i,\alpha}} \frac{\partial}{\partial X_i}$. 
If $\mathscr{R}$ is polynomial, i.e. $Q_{i,\alpha}=1$, $i=1,\ldots,n$, $h_{k,2}=1$, $k=1,\ldots,\mathrm{p}$, then $\mathcal{A}_{obs}(\mathscr{R})$ is the sub-algebra of the ring of polynomials $R[X_1,\ldots,X_n]$. 
Following \cite{Jana} we say that the rational system $\mathscr{R}$ is \emph{algebraically observable}, if $\mathcal{A}_{obs}(\mathscr{R}) = R[X_1,\ldots,X_n]$. Following \cite{NashSIAM} that $\mathscr{R}$ is \emph{semi-algebraically observable} if $\mathrm{trdeg}(\mathcal{A}_{obs}(\mathscr{R}))=n$. We will say that $\mathscr{R}$ is \emph{observable}, if for every two distinct initial states $\upsilon_0,\upsilon_0^{'}$ there exists solutions $(\upsilon,u,y)$ and $(\upsilon^{'},u,y^{'})$ of $\mathscr{R}$ such that $\upsilon(0)=\upsilon_0$, $\upsilon^{'}(0)=\upsilon_0^{'}$, and $y \ne y^{'}$. It is easy to see that algebraic observability implies semi-algebraic observability. Moreover, for polynomial systems algebraic observability implies observability \cite{Bartoszewicz}. 


Recall from \cite[Theorem 4]{JanaVS} that  a rational system $\mathscr{R}$ is minimal, if and only if it is algebraically reachable and semi-algebraically observable.



\begin{Lemma}[Sufficient conditions for minimality]
\label{suff:cond:minimality}
If one of the conditions below holds, then $\Sigma$ is $\sigma$-minimal realization of $p$: 
\begin{itemize}
\item $\mathscr{R}_{aux}(\Sigma)$ is semi-algebraically observable and algebraically reachable. 
\item $\mathscr{R}_{aux}(\Sigma)$ is polynomial, it is algebraically observable and algebraically reachable. 
\item $\mathscr{R}_{aux}(\Sigma)$ is polynomial, it is algebraically observable and accessible.
\end{itemize}
\end{Lemma}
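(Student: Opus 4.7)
The plan is to verify that each of the three listed conditions forces $\mathscr{R}_{aux}(\Sigma)$ to be a \emph{minimal} realization of the auxiliary input--output map $\hat{p}$ defined in \eqref{input-output_map_derivatives}, and then invoke Lemma \ref{RNN_minimality} to transfer minimality from $\mathscr{R}_{aux}(\Sigma)$ to $\Sigma$. The essential ingredient is the characterization recalled just before the statement from \cite[Theorem 4]{JanaVS}: a rational system is minimal if and only if it is algebraically reachable and semi-algebraically observable. By Lemma \ref{realization_of_R_aux_Sigma}, the auxiliary system $\mathscr{R}_{aux}(\Sigma)$ is always a realization of $\hat p$, so the only thing to check under each hypothesis is the pair (algebraic reachability, semi-algebraic observability).

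First I would dispose of the first bullet: the hypothesis is literally the JanaVS criterion, so $\mathscr{R}_{aux}(\Sigma)$ is a minimal realization of $\hat p$, and Lemma \ref{RNN_minimality} immediately yields $\sigma$-minimality of $\Sigma$ as a realization of $p$. For the second bullet, I would observe that algebraic observability is by definition $\mathcal{A}_{obs}(\mathscr{R}_{aux}(\Sigma))=\mathbb{R}[X_1,\ldots,X_L]$ (with $L$ the state dimension of $\mathscr{R}_{aux}(\Sigma)$); since $\mathbb{R}[X_1,\ldots,X_L]$ has transcendence degree $L$ over $\mathbb{R}$, algebraic observability implies semi-algebraic observability, and we are reduced to the first bullet. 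For the third bullet, accessibility means that $\mathrm{R}_{\mathscr{R}_{aux}(\Sigma)}(\upsilon_0)$ contains a non-empty open subset of $\mathbb{R}^L$; any polynomial vanishing on an open set of $\mathbb{R}^L$ must be identically zero, so accessibility implies algebraic reachability, and combined with algebraic observability (which implies semi-algebraic observability, as above) this again reduces to the first bullet.

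I do not anticipate a serious obstacle: the proof is a sequence of elementary reductions chained to already-established results. The only subtlety worth flagging in the write-up is conceptual rather than technical, namely that $\mathscr{R}_{aux}(\Sigma)$ does not itself realize $p$ but rather $\hat p$, so one must pass through Lemma \ref{RNN_minimality} (not the simpler Lemma \ref{minimality_1}) in order to conclude $\sigma$-minimality of $\Sigma$ with respect to $p$. Given how direct the argument is, a two- to three-sentence formal proof for each bullet should suffice.
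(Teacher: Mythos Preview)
Your proposal is correct and follows essentially the same approach as the paper: the paper states that the lemma is a direct consequence of Lemma \ref{RNN_minimality} together with \cite[Proposition 6]{JanaVS} (and, implicitly, the minimality characterization from \cite[Theorem 4]{JanaVS} recalled just before the statement), which is precisely the chain of reductions you outline. Your explicit observation that one must invoke Lemma \ref{realization_of_R_aux_Sigma} so that $\mathscr{R}_{aux}(\Sigma)$ is known to realize $\hat p$, and hence that Lemma \ref{RNN_minimality} (not Lemma \ref{minimality_1}) is the right transfer lemma, is exactly the point the paper relies on.
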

The above lemma is then a direct consequence of Lemma \ref{RNN_minimality} and \cite[Proposition 6]{JanaVS}, its detailed proof is presented in  \cite{TMArxive}.

\subsection{A Hankel-rank like condition for minimality of RNNs}

In this part, we relate minimality of an RNN $\Sigma$ with the transcendence degree of the observation algebra $\mathcal{A}_{obs}(p)$ defined in Definition \ref{observation_algebra_field}. Recall that a linear system is a minimal realization of its input-output map, if and only if the dimension of this system equals the rank of the Hankel-matrix constructed from the Markov parameters of this input-output map. We would like to formulate a similar result, where the role of the rank of the Hankel-matrix is played by the observation algebra $\mathcal{A}_{obs}(p)$. To this end, recall from \cite[Lemma 1, Theorem 4]{JanaVS} that $\mathscr{R}(\Sigma)$ is minimal if and only if $\dim(\mathscr{R}(\Sigma))= \mathrm{trdeg} \mathcal{A}_{obs}(p)$. In a similar manner, if $\mathcal{A}_{obs}(\hat{p})$ is the observation algebra of the input-output map $\hat{p}$ defined in \eqref{input-output_map_derivatives}, then $\mathscr{R}_{aux}(\Sigma)$ is minimal if and only if $\dim(\mathscr{R}_{aux}(\Sigma))= \mathrm{trdeg} \mathcal{A}_{obs}(\hat{p})$. Note that $\mathcal{A}_{obs}(\hat{p})$ is a sub-algebra of $\mathcal{A}_{obs}(p)$ generated by the elements of the form $D_{\alpha_1}\cdots D_{\alpha_l} p_k$, $l > 0$, $k=1,\ldots,\mathrm{p}$, $\alpha_1,\ldots,\alpha_l \in \mathcal{U}$. The following lemma is then a direct consequence of Lemma \ref{minimality_1} and Lemma \ref{RNN_minimality}.

\begin{Lemma}[Hankel-rank like conditions for minimality]
\label{suff:hankel}
Assume that $\sigma$ satisfies $(A1)$, and let the RNN $\Sigma=(A,B,C,\mathcal{U},\sigma,x_0)$ be a realization of the input-output map $p$. 
If one of the following conditions hold for $n=\dim(\Sigma)$:
\begin{itemize}
\item $\mathrm{trdeg} \mathcal{A}_{obs}(p)=n(1+|\mathcal{U}|N)$, or
\item $\mathrm{trdeg} \mathcal{A}_{obs}(\hat{p})=n|\mathcal{U}|N$,
\end{itemize}
then  $\Sigma$ is a $\sigma$-minimal realization of $p$. 
\end{Lemma}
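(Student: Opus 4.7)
The plan is to reduce Lemma \ref{suff:hankel} to the two previously established sufficient conditions, Lemma \ref{minimality_1} and Lemma \ref{RNN_minimality}, by exhibiting the rational systems $\mathscr{R}(\Sigma)$ and $\mathscr{R}_{aux}(\Sigma)$ as minimal realizations whenever the stated transcendence-degree identities hold. The engine that drives both reductions is the fact, recalled from \cite[Lemma 1, Theorem 4]{JanaVS}, that a rational system realizing an input-output map is minimal if and only if its state dimension equals the transcendence degree of the observation algebra of that map, and moreover this transcendence degree is always a lower bound on the dimension of any rational realization.

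First I would handle the first bullet. By Theorem \ref{Theoremi:main} and the explicit construction in Remark \ref{rem:rat:construct}, $\mathscr{R}(\Sigma)$ is a rational realization of $p$ and its state dimension equals $L = n + nN|\mathcal{U}|$. By the lower-bound part of \cite[Lemma 1]{JanaVS} applied to the rational realization $\mathscr{R}(\Sigma)$ of $p$, one has $\mathrm{trdeg}\,\mathcal{A}_{obs}(p) \leqslant L$. Under the hypothesis $\mathrm{trdeg}\,\mathcal{A}_{obs}(p) = n(1+|\mathcal{U}|N) = L$, equality is achieved, so by the minimality criterion \cite[Theorem 4]{JanaVS} the system $\mathscr{R}(\Sigma)$ is a minimal rational realization of $p$. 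Lemma \ref{minimality_1} then yields that $\Sigma$ is a $\sigma$-minimal realization of $p$.

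Next I would treat the second bullet analogously. By Lemma \ref{realization_of_R_aux_Sigma}, $\mathscr{R}_{aux}(\Sigma)$ is a rational realization of the derived input-output map $\hat{p}$ defined in \eqref{input-output_map_derivatives}, and by inspection of Definition \ref{Definition_R_aux_Sigma} its state dimension equals $nN|\mathcal{U}|$, since the state variables are indexed by $(i,j,\alpha)$ ranging over $\{1,\ldots,N\}\times\{1,\ldots,n\}\times\mathcal{U}$. Again by \cite[Lemma 1, Theorem 4]{JanaVS}, $\mathrm{trdeg}\,\mathcal{A}_{obs}(\hat{p}) \leqslant nN|\mathcal{U}|$, with equality iff $\mathscr{R}_{aux}(\Sigma)$ is minimal. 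Under the hypothesis $\mathrm{trdeg}\,\mathcal{A}_{obs}(\hat{p}) = n|\mathcal{U}|N$, minimality of $\mathscr{R}_{aux}(\Sigma)$ follows, and Lemma \ref{RNN_minimality} delivers the desired $\sigma$-minimality of $\Sigma$.

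The only delicate point is to make sure the dimension counts on the rational-system side really are $n + nN|\mathcal{U}|$ and $nN|\mathcal{U}|$, respectively, and that the transcendence-degree bound from \cite{JanaVS} is applicable to our notion of input-output map (which uses $\mathcal{U}_{pc}$), not merely to the response-map formalism of \cite{JanaSIAM,JanaVS}. The first is immediate from Remark \ref{rem:rat:construct} and Definition \ref{Definition_R_aux_Sigma}. The second is handled by the isomorphism of algebras described in Remark \ref{algebra_io-map_integral_domain}, which shows that the transcendence-degree computations transfer between the two settings without change. Beyond these bookkeeping verifications, the argument is a direct specialization of the known rational-system criterion, so no genuinely new technical obstacle arises.
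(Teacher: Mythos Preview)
Your proposal is correct and follows essentially the same approach as the paper: the text immediately preceding the lemma already states that $\mathscr{R}(\Sigma)$ (resp.\ $\mathscr{R}_{aux}(\Sigma)$) is minimal if and only if its dimension equals $\mathrm{trdeg}\,\mathcal{A}_{obs}(p)$ (resp.\ $\mathrm{trdeg}\,\mathcal{A}_{obs}(\hat p)$), and declares the lemma to be a direct consequence of Lemma~\ref{minimality_1} and Lemma~\ref{RNN_minimality}. Your added remark about transferring the transcendence-degree bound via the isomorphism of Remark~\ref{algebra_io-map_integral_domain} is a useful piece of bookkeeping that the paper leaves implicit.
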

The proof of Lemma \ref{suff:hankel} is presented in  \cite{TMArxive}.

\subsection{Some aspects of reachability and observability of RNNs}

One may wonder how restrictive the conditions of Lemma \ref{suff:cond:minimality} are, and how they relate to accessibility/reachability and observability of the RNN $\Sigma$ studied in \cite{ASo,QSc,ADPa}. In fact, observability and reachability properties of $\mathscr{R}_{aux}(\Sigma)$ imply similar properties of the RNN $\Sigma$. In order to present this relationship more precisely, we introduce the following terminology. Define the reachable set of an RNN $\Sigma=(A,B,C,\mathcal{U},\sigma, x_0)$
\begin{equation*}
\begin{split}
& \mathrm{R}_{\Sigma}(x_0) = \{x(t) \; | \; t \geqslant 0 \, , (x,u,y) \\
& \mbox{ is a solution of $\Sigma$}, x(0)=x_0  \}. 
\end{split}
\end{equation*}
We will say that $\Sigma$ is \emph{accessible}, if $\mathrm{R}_{\Sigma}(x_0)$ contains an open subset of $\mathbb{R}^n$, we say that $\Sigma$ is \emph{algebraically reachable} if there is no non-trivial polynomial
which is zero on $R_{\Sigma}(x_0)$. We say that $\Sigma$ is \emph{span-reachable}, if the linear span of the elements $\mathrm{R}_{\Sigma}(x_0)$ is $\mathbb{R}^n$, i.e. $\Sigma$ is reachable if there exist no linear function which is zero on $\mathrm{R}_{\Sigma}(x_0)$. Clearly, if $\Sigma$ is accessible, then it is algebraically reachable, and if $\Sigma$ is algebraically reachable, then it is span-reachable. We say that the RNN $\Sigma$ is \emph{weakly observable} if for every initial state $\hat{x} \in \mathbb{R}^n$ there is an open subset $V$ of $\mathbb{R}^n$ such that $\hat{x} \in V$ and for every $\hat{x} \ne \overline{x} \in V$, there exist solution $(x,u,y)$ and $(x',u,y')$ of $\Sigma$, with $x(0)=\hat{x}$ and $x'(0)=\overline{x}$, such that $y \neq y'$. Then the RNN $\Sigma$ is \emph{observable} if for every initial state  $\hat{x} \in \mathbb{R}^n$, $V = \mathbb{R}^n$ in the latter definition.

\begin{Lemma} \label{RNN_ReachObs}
Let $\Sigma = (A,B,C,\mathcal{U},\sigma,x_0)$ be an RNN.
\begin{itemize}
\item
If $\mathscr{R}_{aux}(\Sigma)$ is algebraically reachable, then 
$\Sigma$ is span-reachable.  In particular, if $\mathscr{R}_{aux}(\Sigma)$ is accessible, then  $\Sigma$ is span-reachable. 
\item
If $\mathscr{R}_{aux}(\Sigma)$ is polynomial, and it is observable, and if the function $\sigma$ is invertible and $Ker(A)$ is trivial, then $\Sigma$ is observable. 
In particular, if $\mathscr{R}_{aux}$ is algebraically observable, then $\Sigma$ is observable. 
\item
If $\mathscr{R}_{aux}(\Sigma)$ is polynomial, and it is semi-algebraically observable, and if the function $\sigma$ is invertible and $Ker(A)$ is trivial, then $\Sigma$ is weakly observable. 
\end{itemize}
\end{Lemma}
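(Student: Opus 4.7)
The plan is to work with the analytic map $\tilde F:\mathbb{R}^n \to \mathbb{R}^L$, $L=nN|\mathcal{U}|$, defined implicitly by the construction of $\mathscr{R}_{aux}(\Sigma)$ in Definition~\ref{Definition_R_aux_Sigma}, namely $\tilde F(x)_{\phi(i,j,\alpha)} = \xi_i(e_j^T(Ax+B\alpha))$. From the derivation underlying Lemma~\ref{realization_of_R_aux_Sigma}, $(x,u,y)$ is a solution of $\Sigma$ if and only if $(\tilde F\circ x,u,\hat y)$ is a solution of $\mathscr{R}_{aux}(\Sigma)$, so uniqueness of solutions gives $\mathrm{R}_{\mathscr{R}_{aux}(\Sigma)}(\tilde F(x_0))=\tilde F(\mathrm{R}_\Sigma(x_0))$. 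Under the hypotheses of parts (2) and (3), $\tilde F$ is in fact \emph{injective}: the identity $\sigma = U_0(\xi)/V_0(\xi)$ together with invertibility of $\sigma$ forces $\xi:\mathbb{R}\to\mathbb{R}^N$ to be injective, and combined with $\ker A=\{0\}$ this forces $\tilde F$ to be injective.

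For part~(1) I would argue by contrapositive. Assuming some $v\ne 0$ satisfies $v^T x = 0$ on $\mathrm{R}_\Sigma(x_0)$ and using that $\mathrm{R}_\Sigma(x_0)$ is stable under applying any constant input $\alpha\in\mathcal{U}$ for a short positive time, differentiating at $t=0^+$ yields $v^T \vec\sigma(Ax+B\alpha)=0$ on $\mathrm{R}_\Sigma(x_0)$ for every $\alpha\in\mathcal{U}$. Rewriting $\sigma(e_j^T(Ax+B\alpha))=U_0(\upsilon_{j,\alpha})/V_0(\upsilon_{j,\alpha})$ and clearing denominators produces the polynomial identity
\[
P_\alpha(\upsilon) := \sum_{j=1}^n v_j \,U_0(\upsilon_{j,\alpha})\prod_{j'\ne j}V_0(\upsilon_{j',\alpha}) = 0 \quad \text{on } \mathrm{R}_{\mathscr{R}_{aux}(\Sigma)}(\tilde F(x_0)).
\]
The polynomial $P_\alpha$ is non-trivial because $U_0/V_0$ agrees with the non-constant $\sigma$ along the curve $\xi$, so no non-zero linear combination across the independent blocks $\upsilon_{1,\alpha},\ldots,\upsilon_{n,\alpha}$ can vanish identically; hence $\mathscr{R}_{aux}(\Sigma)$ fails algebraic reachability, which proves part~(1).

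For part~(2), given distinct $\hat x\ne\bar x$, injectivity of $\tilde F$ yields $\tilde F(\hat x)\ne\tilde F(\bar x)$, and observability of $\mathscr{R}_{aux}(\Sigma)$ then supplies some input $u$ under which the two trajectories of $\mathscr{R}_{aux}(\Sigma)$ produce different outputs. By Lemma~\ref{realization_of_R_aux_Sigma} these outputs are the one-sided derivatives $(D_{\alpha_1}p,\ldots,D_{\alpha_K}p)$ of the $\Sigma$-outputs corresponding to initial states $\hat x,\bar x$, and since two distinct analytic functions cannot share all of their one-sided derivatives, the $\Sigma$-outputs themselves differ, proving observability of $\Sigma$. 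The ``in particular'' statement is immediate since algebraic observability implies observability for polynomial systems \cite{Bartoszewicz}.

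For part~(3) the same transfer is used, but the initial step is more delicate: one must first promote semi-algebraic observability of the polynomial system $\mathscr{R}_{aux}(\Sigma)$ to \emph{local} distinguishability at every point, i.e., to the statement that for each $\hat\upsilon\in\mathbb{R}^L$ there is an open neighborhood $W$ on which distinct states produce distinct outputs of $\mathscr{R}_{aux}(\Sigma)$. This follows from $\mathrm{trdeg}\,\mathcal{A}_{obs}(\mathscr{R}_{aux}(\Sigma))=L$ via the usual inverse-function-theorem argument applied to $L$ algebraically independent observable polynomials \cite{NashSIAM,Bartoszewicz}. Choosing an open $V\ni\hat x$ with $\tilde F(V)\subseteq W$ and invoking injectivity of $\tilde F$ then yields weak observability of $\Sigma$. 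The main obstacle I anticipate is precisely this semi-algebraic-to-local upgrade at every point (not just generically); the invertibility of $\sigma$ and triviality of $\ker A$ enter here by ensuring that $\tilde F$ is an everywhere-injective immersion, so that the local inverse-function analysis descends cleanly from $\mathscr{R}_{aux}(\Sigma)$ to $\Sigma$.
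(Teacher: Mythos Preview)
Your proposal is correct and follows essentially the same route as the paper's own proof: part~(1) via the contrapositive, differentiating a linear relation on $\mathrm{R}_\Sigma(x_0)$ to obtain a polynomial relation on the $\upsilon_{j,\alpha}$ variables that contradicts algebraic reachability of $\mathscr{R}_{aux}(\Sigma)$; part~(2) by transferring indistinguishability through the embedding $\tilde F$ and then using invertibility of $\sigma$ together with $\ker A=\{0\}$ to recover the original states; part~(3) by first upgrading semi-algebraic observability of the polynomial system $\mathscr{R}_{aux}(\Sigma)$ to weak observability via \cite[Proposition~4.20, Corollary~4.22]{NashSIAM} and then pulling back the distinguishing neighborhood along the continuous map $\tilde F$. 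Your explicit packaging of the argument around the injectivity of $\tilde F$ (deduced from $\sigma=U_0(\xi)/V_0(\xi)$ and $\ker A=\{0\}$) is slightly cleaner than the paper's direct computation, and you are more careful than the paper about the non-triviality of $P_\alpha$; conversely, the paper is a bit more explicit about how the polynomial hypothesis ($V_0=1$) is used in part~(2) to pass from $\upsilon_{j,\alpha}(0)=\upsilon'_{j,\alpha}(0)$ to equality of the $\sigma$-values.
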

The proof of Lemma \ref{RNN_ReachObs} is presented in  \cite{TMArxive}.
\begin{Remark}[Invertibility of $\sigma$]
Note that assuming that the activation function $\sigma$ is invertible is not too restrictive as it holds for many commonly used activation functions, see Example \ref{example_activation_functions}.
\end{Remark}
Observe that accessibility, and algebraic / semi-algebraically observability conditions for rational / polynomial systems can be checked by using methods of computer algebra \cite{JanaCDC2016}. 
In contrast, for checking accessibility and (weak) observability of an RNN the only systematic tools are 
the rank conditions \cite[Theorem 2.2, Theorem 2.5, Theorem 3.1, Theorem 3.5]{NLCO} or \cite[Corollary 2.2.5, Corollary 2.3.5]{Isidori}, which are not computationally effective. 
\begin{Remark}[Minimality of RNN as an analytic system]
\label{rem:analytic}
From \cite[Theorem 1.12]{J} it follows that if the RNN $\Sigma$  is accessible and weakly observable, then it is a minimal realization of its input-output map $p$.  From the comparison between the conditions of  Lemma \ref{suff:cond:minimality} with those of Lemma \ref{RNN_ReachObs} it
is clear that minimality of $\mathscr{R}_{aux}(\Sigma)$ is a much weaker condition than
accessibility and weak observability of $\Sigma$. This suggests that using realization theory of rational systems is likely to yield more useful results for RNNs than using realization theory of general analytic systems. 
\end{Remark}

Recall from \cite[Theorem 1]{ASo} that a necessary condition for observability of $\Sigma = (A,B,C,\mathcal{U},\sigma,x_0)$ is that the largest $A$-invariant coordinate subspace of $\Sigma$ included in $Ker(C)$ is trivial. More precisely, following \cite{ASo} we say that a vector subspace $V$ of $\mathbb{R}^n$ is a \emph{coordinate subspace} if it is spanned by some vectors from the canonical basis of $\mathbb{R}^n$, i.e. there exists an integer $s > 0$ and integers $i_1, \ldots, i_s \in \{1, \ldots, n\}$ such that $V$ is spanned by $e_{i_1}, \ldots, e_{i_s}$, where $(e_1, \ldots, e_n)$ denotes the canonical basis of $\mathbb{R}^n$. We write $\mathcal{O}_c(A,C)$ the largest coordinate subspace which is $A$-invariant and contained in $Ker(C)$.

\begin{Lemma} \label{semi-alg-obs_R_aux}
If $\mathscr{R}_{aux}(\Sigma)$ is polynomial and it is semi-algebraically observable, then there exists no non-trivial coordinate subspace which is $A$-invariant and contained in $Ker(C)$, i.e. $\mathcal{O}_c(A,C)=\{0\}$.
\end{Lemma}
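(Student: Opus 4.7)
The plan is to prove the contrapositive. Suppose $\mathcal{O}_c(A,C) \neq \{0\}$, so there is a non-trivial coordinate subspace $V = \mathrm{span}(e_{i_1},\ldots,e_{i_s})$, $s \geqslant 1$, which is $A$-invariant and contained in $Ker(C)$. Write $J = \{i_1,\ldots,i_s\}$. The first step I would carry out is to translate the two hypotheses into statements about matrix entries: $V \subseteq Ker(C)$ gives $c_{k,j}=0$ for every $k$ and every $j \in J$, and the condition $AV \subseteq V$ (i.e., column $j$ of $A$ lies in $V$ for every $j \in J$) gives $a_{l,j}=0$ whenever $l \notin J$ and $j \in J$.

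The main step is to exhibit a proper polynomial sub-algebra of the ambient polynomial ring that already contains $\mathcal{A}_{obs}(\mathscr{R}_{aux}(\Sigma))$. Let $\mathcal{B}$ be the sub-algebra of $\mathbb{R}[\{\upsilon_{i,l,\alpha}\}]$ generated by exactly those coordinates $\upsilon_{i,l,\alpha}$ with $l \notin J$. I need two things. First, every output $y_{k,\alpha}=\sum_{i=1}^{n} c_{k,i} U_0(\upsilon_{i,\alpha})$ lies in $\mathcal{B}$, because $c_{k,i}=0$ kills precisely the summands with $i \in J$, leaving only terms involving the reduced coordinates. Second, each formal Lie derivative
\[
f_\beta = \sum_{i,j,\alpha} U_i(\upsilon_{j,\alpha})\Bigl(\sum_{l} a_{j,l} U_0(\upsilon_{l,\beta})\Bigr)\frac{\partial}{\partial \upsilon_{i,j,\alpha}}
\]
stabilizes $\mathcal{B}$: for $g \in \mathcal{B}$ the derivative $\partial g/\partial \upsilon_{i,j,\alpha}$ vanishes unless $j \notin J$, and for such a row index $j$ the $A$-invariance condition forces $a_{j,l}=0$ on $l \in J$, so the inner sum collapses to a polynomial purely in the reduced variables $\upsilon_{\cdot,l,\beta}$ with $l \notin J$; the factor $U_i(\upsilon_{j,\alpha})$ is already in $\mathcal{B}$ since $j \notin J$. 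Since $\mathcal{A}_{obs}(\mathscr{R}_{aux}(\Sigma))$ is by definition the smallest sub-algebra of polynomials containing the outputs and closed under all $f_\beta$, these two points give $\mathcal{A}_{obs}(\mathscr{R}_{aux}(\Sigma)) \subseteq \mathcal{B}$.

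The conclusion is then a transcendence-degree count: $\mathrm{trdeg}(\mathcal{B}) = N(n-s)|\mathcal{U}|$, which is strictly less than $Nn|\mathcal{U}| = \dim(\mathscr{R}_{aux}(\Sigma))$ because $s \geqslant 1$. Hence $\mathrm{trdeg}(\mathcal{A}_{obs}(\mathscr{R}_{aux}(\Sigma))) < \dim(\mathscr{R}_{aux}(\Sigma))$, contradicting semi-algebraic observability and completing the contrapositive.

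I expect the main obstacle to be the stability of $\mathcal{B}$ under the formal Lie derivatives: one must line up the ``row index $j \notin J$ forces $a_{j,l}=0$ on $l \in J$'' consequence of coordinate $A$-invariance with the indices appearing in the vector field, and this is also where the polynomial form of $(A1)$ is essential, since denominators $V_i$ might otherwise introduce variables outside $\mathcal{B}$. Once that closure check is in place, the translation of the two hypotheses into entry conditions and the final dimension count are routine.
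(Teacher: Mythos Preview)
Your proposal is correct and follows essentially the same route as the paper's own proof: both argue by contradiction/contrapositive, translate the coordinate-subspace hypotheses into vanishing conditions on the entries of $A$ and $C$, show that the observation algebra of $\mathscr{R}_{aux}(\Sigma)$ is contained in the polynomial ring on the reduced set of coordinates indexed by the complement of the invariant subspace, and finish with the transcendence-degree count. Your Lie-derivative closure check is in fact more explicit than the paper's ``by some calculations'' step, but the underlying argument is identical (note only that your index set $J$ plays the role of the paper's $I$, with the complement playing the role of the paper's $J$).
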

The proof of Lemma \ref{semi-alg-obs_R_aux} is presented in  \cite{TMArxive}.

By \cite[Theorem 1]{ASo}  $\mathcal{O}_c(A,C)=\{0\}$, it is also sufficient if $\ker(C) \cap \ker(A)=\{0\}$, the activation function $\sigma$ satisfies only the IPP property, given in \cite{ASo} for example, and if $B$ verifies a condition on its rows. But here we do not need the latter hypothesis on $\Sigma$.


\begin{Example} \label{example_RNN_semi-alg-obs}
Let $\sigma: \mathbb{R} \rightarrow \mathbb{R}$ be the sigmoid function as in Example \ref{example_activation_functions}. Consider the RNN with activation function $\sigma$, defined as follows:
\begin{equation*}
\Sigma \; : \; \left\lbrace \begin{split}
\dot{x_1} = \sigma(x_2 + u), \quad 
\dot{x_2} = \sigma(x_1 + u) \\
x_1(0) = x_2(0) = 0, \quad y = x_1
\end{split}\right.
\end{equation*}
Here $A = \left( \begin{array}{cc}
0 & 1\\
1 & 0
\end{array} \right)$, $B = (1, 1)^T$, $C = (1, 0)$ and $u \in \mathbb{R} = \mathcal{U}$ is a fixed real number. The auxiliary rational system associated with $\Sigma$ is then given by
\begin{equation*}
\mathscr{R}_{aux}(\Sigma) \; : \; \left\lbrace \begin{split}
&\dot{\upsilon_1} = \upsilon_1 \upsilon_2 (1 - \upsilon_1), \quad \dot{\upsilon_2} = \upsilon_1 \upsilon_2 (1 - \upsilon_2)\\
& \upsilon_1(0) = \upsilon_2(0) = \sigma(\frac{1}{2} + u)\\
& \hat{y} = \upsilon_1
\end{split}\right.
\end{equation*}
where $\upsilon_k = \dot{x_k}$ for $k=1,2$. Denote by $f$ the vector field generated by $\mathscr{R}_{aux}(\Sigma)$. The output map is here $h(\upsilon_1, \upsilon_2) = \upsilon_1$, simply written $h = \upsilon_1 \in \mathcal{A}(\mathscr{R}_{aux}(\Sigma))$. We clearly have $L_f h = h \, \upsilon_2 ( 1 - h) \in \mathcal{A}(\mathscr{R}_{aux}(\Sigma))$, where $L_f$ is the Lie derivative operator along the vector field $f$. Moreover we get $\upsilon_2 = \frac{L_f h}{h(1-h)}$, and the latter belongs to the field of fractions of $\mathcal{A}(\mathscr{R}_{aux}(\Sigma))$. Thus the latter field is equal to $\mathbb{R}(\upsilon_1, \upsilon_2)$, which shows that $\mathscr{R}_{aux}(\Sigma)$ is semi-algebraically observable. By Lemma \ref{RNN_ReachObs}, the RNN $\Sigma$ is weakly observable.
\end{Example}

\section{Conclusions}

We have shown that input-output maps of a large class of recurrent neural networks can be represented by rational/polynomial systems, and we used this fact to derive necessary and sufficient conditions for existence of a realization by a recurrent neural network and its minimality. Future research will be directed towards deriving a more complete realization theory of recurrent neural network and for using the results of realization theory for analyzing machine learning algorithms.

\appendix

\section{Proofs}

In this appendix, we write all the technical proofs.

\begin{proof}[Proof of Lemma  \ref{lemma:assum:eq}]
\textbf{If $\sigma$ verifies $(A1)$ , it satisfies $(A2)$} \\
  Consider the algebra $A_{obs}(\xi_1,\ldots,\xi_N)$ genereted by $\xi_1,\ldots,\xi_N$ and their
  high-order derivatives, i.e., $A_{obs}(\xi_1,\ldots,\xi_N)$ is generated by $\{\xi_j^{(k)}\}_{k \in \mathbb{N},j=1}^{N}$. Since $\xi_1,\ldots,\xi_N$ are analytic, $A_{obs}(\xi_1,\ldots,\xi_N)$ is the sub-algebra of
 the algebra of all real analytic functions, and hence it is an integral domain.
  Note that $\dot \xi_i,\sigma$ is algebraic over $\xi_1,\ldots,\xi_n$.
Indeed, it suffices to take $Q_k(X_1, \ldots, X_{N+1}) = X_{N+1} V_k(X_1, \ldots X_N) - U_k(X_1, \ldots, X_N)$, for $0 \ge k \leqslant N$, and then $Q_i(\dot \xi_i,\xi_1,\ldots,\xi_N)=0$, $i=1,\ldots,N$ and
$Q_O(\sigma,\xi_1,\ldots,\xi_N)=0$
By taking the $r$th derivative of $Q_i(\dot \xi_i,\xi_1,\ldots,\xi_N)$, $Q_O(\sigma,\xi_1,\ldots,\xi_N)=0$,
we can conclude that the $r$th derivative $\xi^{(r)}_i$, $\sigma^{(r)}$ of $\xi_i$ and $\sigma$ respectively, are algebraic  over $\{\xi_k^{(l)}\}_{l=1,k=1}^{r-1,N}$ and $\{\xi_k^{l},\sigma^{(l)}\}_{l=1,k=1}^{r-1,N}$
, and hence, by induction on $r$, we can conclude that $\xi_i^{(r)}$, $\sigma^{(r)}$ 
are algebraic over $\xi_1,\ldots,\xi_N$. Hence, 
the algebra $A_{obs}(\sigma)$ generated by $\{\sigma^{(r)}\}_{r=0}^{\infty}$ is algebraic
over $\xi_1,\ldots,\xi_N$ and hence the transcedence degree of $A_{obs}(\Sigma)$ is at most $N$.
The latter means that there exist a non-zero polynomial $Q$ such that 
$Q(\sigma,\ldots,\sigma^{(N)})=0$, i.e. Assumption $(A2)$ holds.

\textbf{If $\sigma$ satisfies $(A2)$, then it satisfes $(A2)$}.
 Let $k \in \{0, \ldots, N\}$. It suffices here to take derivatives the equation \eqref{Assumption2} in Assumption $(A2)$, as follows:
\begin{equation*}
\begin{split}
& \frac{\partial Q}{\partial X_{N+1}}(\sigma_1, \ldots, \sigma^{(N)}) \, \sigma^{(N+1)} \, + \\
& \, \sum_{l=1}^{N} \frac{\partial Q_k}{\partial X_l}(\sigma, \ldots, \sigma^{(N)})\sigma^{(l)}= 0 \, ,
\end{split}
\end{equation*}
Then set $\xi_i=\sigma^{(i-1)}$, $i=1,\ldots,N+1$ and 
$V_k=1$, $U_k=X_{k+1}$, $k=0,1,\ldots,N$, and
$V_{N+1}=\frac{\partial Q}{\partial X_{N+1}}$, and 
$U_{N+1}=\sum_{i=1}^{N} \frac{\partial Q}{\partial X_i}(X_1,\ldots,X_{N})X_{i}$.
\end{proof}


\bigskip

\begin{proof}[Proof of Lemma \ref{Theoremi:main:lemma}]
Let $\Sigma = (A, B, C, \sigma, x_0)$ be an RNN and let $(x, u, y)$ be a solution of $\Sigma$, for a given $u \in \mathcal{U}_{pc}$. Without loss of generality, we suppose that $u(t) = \beta \in \mathcal{U}$. As in the statement of Lemma \ref{Theoremi:main:lemma}, write $\upsilon(t) = F(x(t))$, for $t \geqslant 0$. Clearly we have $\upsilon_{i,j,\alpha} = \xi_i \big( e_j^T (Ax + B \alpha) \big)$, for $i \in \{1, \ldots, N\}$, $j \in \{1, \ldots, n\}$ and $\alpha \in \mathcal{U}$, where $\xi = (\xi_1, \ldots, \xi_N)$ are analytic functions as in \eqref{Assumption1}. Thus it suffices to prove that $\upsilon_{i,j,\alpha}$ satisfies the differential equation given in Definition \ref{rational_system_R(Sigma)}. By taking the first derivative, we obtain
\begin{align*}
& \dot{\upsilon}_{i,j,\alpha}(t)\\
& = \xi_i^{(1)} \big( e_j^T (A x + B \alpha) \big) \{ \sum_{l=1}^n a_{j,l} \dot{x}_l(t;\beta) \}\\
& = \frac{U_i(\upsilon_{1,j,\alpha}(t), \ldots, \upsilon_{N,j,\alpha}(t))}{V_i(\upsilon_{1,j,\alpha}(t), \ldots, \upsilon_{N,j,\alpha}(t))} \{ \sum_{l=1}^n a_{j,l} \sigma \big( e_l^T (A x + B \beta) \big) \}\\
& = \frac{U_i(\upsilon_{j,\alpha}(t))}{V_i(\upsilon_{j,\alpha}(t))} \{ \sum_{l=1}^n a_{j,l} \frac{U_0(\upsilon_{j,\beta}(t))}{V_0(\upsilon_{j,\beta}(t))} \big) \} \, ,
\end{align*}
where $\xi_i^{(1)}$ denotes the first derivative of $\xi_i$. Thus it follows that $(\upsilon, u, y)$ of a solution of the rational system $\mathscr{R}(\Sigma)$ given in Definition \ref{rational_system_R(Sigma)}.
\end{proof}

\bigskip 

\begin{proof}[Proof of Lemma \ref{realization_of_R_aux_Sigma}]
Let $p: \mathcal{U}_{pc} \rightarrow PC([0; +\infty[, \mathbb{R}^p)$ be an input-output map. Suppose that the RNN $\Sigma = (A, B, C, \sigma, x_0)$ realizes $p$, i.e. for all $u \in \mathcal{U}_{pc}$, there exists a solution $(x,u,y)$ of $\Sigma$ such that, for all $t \geqslant 0$, $y(t) = p(u)(t)$. It suffices to prove that, for all $\alpha \in \mathcal{U}$ and $k \in \{1, \ldots, p\}$, $y_{k,\alpha} = D_{\alpha} p_k(u)$ where $y_{k,\alpha}$ is defined in Definition \ref{Definition_R_aux_Sigma}, the map $p_k: \mathcal{U}_{pc} \rightarrow PC([0; +\infty[, \mathbb{R})$ is the $k$-th component of $p$, and $D_{\alpha}p_k$ is the derivative of $p_k$ along $\alpha \in \mathcal{U}$ in the sense of Definition \ref{derivation_real_function}.\\
For all $\alpha \in \mathcal{U}$, $k \in \{1, \ldots, p\}$, $u \in \mathcal{U}_{pc}$ and $t \geqslant 0$, we have
\begin{align*}
D_{\alpha}p_k(u)(t)
& = \frac{d}{ds} \Big( p_k(u_{\alpha})(t+s) \Big)_{| \; s=0}\\
& = \sum_{i=1}^n c_{k,i} \frac{d}{ds} \Big( x_i(t+s;u_{\alpha}) \Big)_{| \; s=0}\\
& = \sum_{i=1}^n c_{k,i} \, \sigma \Big( \sum_{j=1}^n a_{i,j} x_j(t;u) + e_i^T B \alpha \Big)\\
& = \sum_{i=1}^n c_{k,i} \frac{U_0(\upsilon_{1,i,\alpha}(t), \ldots, \upsilon_{N,i,\alpha}(t))}{V_0(\upsilon_{1,i,\alpha}(t), \ldots, \upsilon_{N,i,\alpha}(t))}\\
& = \sum_{i=1}^n c_{k,i} \frac{U_0(\upsilon_{i,\alpha}(t)}{V_0(\upsilon_{i,\alpha}(t))}\\
& = y_{k,\alpha}(t) \, ,
\end{align*}
as desired. Here we recall that the input $u_{\alpha} \in \mathcal{U}_{pc}$ is given in Definition \ref{derivation_real_function}.
\end{proof}

\bigskip

\begin{proof}[Proof of Lemma \ref{RNN_minimality}]
Recall that $| \mathcal{U} | = K$. We know that $dim(\mathscr{R}_{aux}(\Sigma) = n K N$. Now assume that there is a recurrent neural networks $\hat{\Sigma}$, with $\hat{n} = dim(\hat{\Sigma}) < dim(\Sigma) = n$. It is clear that we have $dim(\mathscr{R}_{aux}(\hat{\Sigma})) < dim(\mathscr{R}_{aux}(\Sigma))$, where $\mathscr{R}_{aux}(\hat{\Sigma})$ is the rational system given in Definition \ref{Definition_R_aux_Sigma}, associated with $\hat{\Sigma}$. Hence it contradicts the fact that $\mathscr{R}_{aux}(\Sigma)$ is a minimal rational realization as claimed in the statement of Lemma \ref{RNN_minimality}. Thus $\Sigma$ is a minimal RNN realization of its input-output map.
\end{proof}

\bigskip

\begin{proof}[Proof of Lemma \ref{RNN_ReachObs}]
Let $\Sigma = (A,B,C,\sigma,x_0)$ be an RNN.
\begin{enumerate}
\item Assume that $\mathscr{R}_{aux}(\Sigma)$ is algebraically reachable, i.e. there is no non-trivial polynomial vanishing in the reachable set 
\begin{equation*} 
\begin{split}
& \mathrm{R}_{\mathscr{R}_{aux}(\Sigma)}(\upsilon_0) = \{ \upsilon(t) \; | \; t \geqslant 0, (\upsilon,u,y) \\
& \mbox{ is a solution of } \mathscr{R}_{aux}(\Sigma), \upsilon(0)=\upsilon_0\} \, .
\end{split}
\end{equation*}
Hence the components of $\upsilon(t)$ are algebraically independent, for $t \geqslant 0$. Now take $u \in \mathcal{U}_{pc}$ and $(x,u,y)$ a solution of the RNN $\Sigma$. Suppose that, for $t \geqslant 0$, $x_1(t), \ldots x_n(t)$ are linearly dependant. Without loss of generality, we can say that there are real values not all trivial $\lambda_1, \ldots, \lambda_{n-1}$ such that
\begin{equation*}
x_n(t) = \sum_{i=1}^{n-1} \lambda_i x_i(t) \, .
\end{equation*}
By taking the first derivative of the latter equation, we get
\begin{equation*}
\displaystyle \dot{x_n}(t) = \sum_{i=1}^{n-1} \lambda_i \dot{x_i}(t) \, ,
\end{equation*}
which implies that we have
\begin{align*}
\displaystyle & \dot{x_n}(t) \, \prod_{l=1}^n V_0(v_{l, u(t)}) = \sum_{i=1}^{n-1} \lambda_i \dot{x_i}(t) \, \prod_{l=1}^n V_0(v_{l, u(t)})\\[2mm]
& \Leftrightarrow U_0(v_{n,u(t)}) \, \prod_{l=1}^{n-1} V_0(v_{l, u(t)})\\ 
& \hspace*{1cm} - \sum_{i=1}^{n-1} U_0(v_{i,u(t)}) \, \prod_{l=1, l \neq i}^n V_0(v_{l, u(t)}) = 0 \, .
\end{align*}
This is a contradiction by hypothesis of the statement. Finally $x_1(t), \ldots, x_n(t)$ are linearly independent. Thus it says that the RNN is span-reachable.

\item Assume that $\mathscr{R}_{aux}(\Sigma)$ is polynomial and observable in the sense of distinguishable states. Moreover suppose that the activation function $\sigma$ is invertible and that $Ker(A)$ is trivial. Take two initial states $\overline{x}, \hat{x} \in \mathbb{R}^n$ such that there exist solutions $(x,u,y)$ and $(x',u,y')$ of $\Sigma$, such that $x(0)=\overline{x}, x'(0)=\hat{x}$ and $y = y'$. We now prove that $\overline{x} = \hat{x}$. By using Lemma \ref{Theoremi:main:lemma} and Definition \ref{Definition_R_aux_Sigma}, it is easy to prove that there are solutions $(\upsilon,u,(y_{k,\alpha})_{k,\alpha})$ and $(\upsilon',u,(y_{k,\alpha}')_{k,\alpha})$ of the rational system $\mathscr{R}_{aux}(\Sigma)$, with $\upsilon_{j,\alpha}(0) = \xi \big( e_j^T (A \overline{x} + B \alpha) \big)$ and $\upsilon_{j,\alpha}'(0) = \xi \big( e_j^T (A \hat{x} + B \alpha) \big)$, such that $y_{k,\alpha} = y_{k,\alpha}'$, for $j \in \{1, \ldots, n\}$, $k \in \{1,\ldots, p\}$ and $\alpha \in \mathcal{U}$. As $\mathscr{R}_{aux}(\Sigma)$ is polynomial, it follows that we have
\begin{align*}
\sigma \big( e_j^T (A \overline{x} + B \alpha) \big)
& = U_0(\upsilon_{j,\alpha}(0))\\
& = U_0(\upsilon_{j,\alpha}'(0))\\
& = \sigma \big( e_j^T (A \hat{x} + B \alpha) \big) \, ,
\end{align*}
by using Assumption $(A1)$. As $\sigma$ is invertible, we get $\overline{x} - \hat{x} \in Ker(A)$, which implies that $\overline{x} = \hat{x}$ because $Ker(A)=\{0\}$. Thus the RNN $\Sigma$ is observable.

\item Assume that $\mathscr{R}_{aux}(\Sigma)$ is polynomial and it is semi-algebraically observable. Moreover suppose that the activation function $\sigma$ is invertible and that $Ker(A)$ is trivial. From \cite[Proposition 4.20, Corollary 4.22]{NashSIAM}, $\mathscr{R}_{aux}(\Sigma)$ is weakly observable, i.e. for every initial state $\hat{\upsilon} \in \mathbb{R}^{n | \mathcal{U} | N}$ there exists an open set $W$ of $\mathbb{R}^{n |\mathcal{U}| N}$ such that, for all $\overline{\upsilon} \in W$, there are solutions $(\upsilon, u, y)$ and $(\upsilon',u,y')$ of $\mathscr{R}_{aux}(\Sigma)$ verifying $\upsilon(0)=\hat{\upsilon}, \upsilon'(0)=\overline{\upsilon}$ and $y \neq y'$. Consider now the map $\hat{F}: \mathbb{R}^n \rightarrow \mathbb{R}^{n | \mathcal{U} | N}, \, x \mapsto (z_1, \ldots, z_{n N | \mathcal{U} |})$, which is the composition of a projection map with the map $F: \mathbb{R}^n \rightarrow \mathbb{R}^{n | \mathcal{U} | (N+1)}$ defined in Lemma \ref{Theoremi:main:lemma}. Thus it is a continuous map, and it is straightforward to check that, for $u \in \mathcal{U}_{pc}$ and $(x,u,y)$ a solution of the RNN $\Sigma$, $(\upsilon,u,(y_{k,\alpha}))$ is a solution of the rational system $\mathscr{R}_{aux}(\Sigma)$, where $\upsilon(t) = \hat{F}(x(t))$ for $t \geqslant 0$. By continuity of the map $\hat{F}: \mathbb{R} \rightarrow \mathbb{R}^{n (| \mathcal{U} | N+1)}$, $V = (\hat{F})^{-1}(W)$ is an open set of $\mathbb{R}^n$. Thus, by using similar arguments as in the proof of second point of Lemma \ref{RNN_ReachObs}, for every initial states $\hat{x}, \overline{x} \in V$, there are solutions $(x,u,y)$ and $(x',u,y')$ of the RNN $\Sigma$ such that $x(0)=\hat{x}, x'(0)=\overline{x}$ and $y \neq y'$. Hence the RNN $\Sigma$ is weakly observable.
\end{enumerate}
\end{proof}

\bigskip

\begin{proof}[Proof of Lemma \ref{semi-alg-obs_R_aux}]
Assume that the rational system $\mathscr{R}_{aux}(\Sigma)$ is semi-algebraically observable, i.e. $\mathrm{trdeg} A_{obs}(\mathscr{R}_{aux}(\Sigma)) = n K N$.  Suppose that there is a non-trivial coordinate space $V$ which is $A$-invariant and included in $Ker(C)$. We have 
\begin{equation*}
V = span\{ e_i \; | \; i \in I\} \, , \quad \text{with} \hspace*{3mm} \emptyset \neq I \subset \{1, \ldots, n\} \, .
\end{equation*}
As $V$ is included in $Ker(C)$, it means that, for $i \in I$, the $i$-th column of $C$ is trivial. Moreover saying that $V$ is $A$-invariant means that, for $j \not\in I$ and $k \in I$, $a_{jk} = 0$. We recall that the output of the rational system $\mathscr{R}(\Sigma)$ is given as follows:
\begin{align*}
\displaystyle \forall k \in \{1, \ldots, p\} \, , \hspace*{3mm} y_k(t) 
= \sum_{i=1}^n c_{k,i} x_i(t)
= \sum_{j \in J} c_{k,j} x_j(t) \, ,
\end{align*}
where $J = \{1, \ldots, n\} \backslash I$. The output map $y_{\alpha, k}(t)$ of the rational system $\mathscr{R}_{aux}(\Sigma)$ at time $t$ is constructed from the output map of $\mathscr{R}(\Sigma)$ by taking the first derivative, by Leamma \ref{realization_of_R_aux_Sigma}. Thus, for $k \in \{1, \ldots, p\}$ and $\alpha \in \mathcal{U}$, we get
\begin{align*}
y_{k,\alpha}(t) 
= \sum_{i=1}^n c_{k,i} \, \frac{U_0(\upsilon_{i,\alpha}(t))}{V_0(\upsilon_{i,\alpha}(t))}
= \sum_{j \in J} c_{k,j} \, \frac{U_0(\upsilon_{j,\alpha}(t))}{V_0(\upsilon_{j,\alpha}(t))}
\end{align*}
for $k = 1, \ldots, p$. As $a_{j,l} = 0$ for $j \in J$ and $l \in I$, we just need to take the variables $\upsilon_{i,j,\alpha}$ with $i \in \{1, \ldots, N\}$, $j \in J$ and $\alpha \in \mathcal{U}$. Because $\upsilon_{i,j,\alpha}$ satisfies now the following differential equation:
\begin{align*}
\dot{\upsilon}_{i,j,\alpha}(t) 
& = \frac{U_i(\upsilon_{j,\alpha}(t))}{V_i(\upsilon_{j,\alpha}(t))} \, \{ \, \sum_{l=1}^n a_{j,l} \frac{U_0(\upsilon_{l,\beta}(t))}{V_0(\upsilon_{l,\beta}(t))} \, \}\\
& = \frac{U_i(\upsilon_{j,\alpha}(t))}{V_i(\upsilon_{j,\alpha}(t))} \, \{ \, \sum_{l \in J}^n a_{j,l} \frac{U_0(\upsilon_{l,\beta}(t))}{V_0(\upsilon_{l,\beta}(t))} \, \} \, , \; \mbox{if} \; u(t) = \beta \, .
\end{align*}
By some calculations, it is possible to prove that the fraction of field of $\mathcal{A}_{obs}(\mathscr{R}_{aux}(\Sigma))$ is included in the fraction of field of the ring
\begin{equation*}
\mathbb{R}[\upsilon_{i,j,\alpha} \; | \; \alpha \in \mathcal{U} \, , i \in \{1, \ldots, N\} \, , j \in J\} \, ,
\end{equation*}
so that $\mathrm{trdeg} \mathcal{A}_{obs}(\mathscr{R}_{aux}(\Sigma)) \leqslant | J | \, d \, N < n \, d N$, because $I \neq \emptyset$ implying that $| J | < n$. This is a contradiction. Hence there is no non-trivial coordinate subspace $A$-invariant included in $Ker(C)$.
\end{proof}

\end{document}